\newtheorem{theorem}{Theorem}[section]
\newtheorem{lemma}[theorem]{Lemma}
\newtheorem{corollary}[theorem]{Corollary}
\theoremstyle{definition}
\newtheorem{definition}[theorem]{Definition}
\newtheorem{remark}[theorem]{Remark}
\newtheorem{example}[theorem]{Example}
\numberwithin{equation}{section}
\newcommand{\R}{\mathbb{R}}
\newcommand{\C}{\mathbb{C}}
\newcommand{\N}{\mathbb{N}}
\newcommand{\Q}{\mathbb{Q}}
\begin{document}
\title{Asymptotic behavior of solutions of\\ linear multi-order fractional differential equation systems}
\author{Kai Diethelm\footnote{GNS mbH Gesellschaft f\"ur numerische Simulation mbH, Am Gau\ss berg 2, 38114 Braunschweig, Germany, {\tt diethelm@gns-mbh.com}}\,\,\footnote{AG Numerik, Institut Computational Mathematics, Technische Universit\"at Braunschweig, Universit\"atsplatz 2, 38106 Braunschweig, Germany, {\tt k.diethelm@tu-braunschweig.de}}
\and
Stefan Siegmund\footnote{Center for Dynamics \& Institute for Analysis, Department of Mathematics, Technische Universit\"at Dresden, 01062 Dresden, Germany, {\tt stefan.siegmund@tu-dresden.de}}
\and
H.T.~Tuan\footnote{Institute of Mathematics, Vietnam Academy of Science and Technology, 18 Hoang Quoc Viet, 10307 Ha Noi, Viet Nam, {\tt httuan@math.ac.vn}}
}

\maketitle

\begin{abstract}
In this paper, we investigate some aspects of the qualitative theory for
multi-order fractional differential equation systems. First, we obtain a fundamental
result on the existence and uniqueness for multi-order fractional differential
equation systems. Next, a representation of solutions of homogeneous linear
multi-order fractional differential equation systems in series form is
provided. Finally, we give characteristics regarding the asymptotic behavior
of solutions to some classes of linear multi-order fractional differential
equation systems. 
\end{abstract}

\emph{Keywords:} fractional differential equation, Caputo derivative, multi-order system, asymptotic behavior of solutions, existence and uniqueness

\emph{2010 Mathematics Subject Classification:} Primary 34A08; Secondary 34A12, 34A30, 34D05

\section{Introduction}

In recent years, fractional calculus has received increasing attention due to
its applications in a variety of disciplines such as mechanics, physics,
chemistry, biology, electrical engineering, control theory, material science,
mathematical psychology. For more details, we refer the reader to the
monographs \cite{BaleanuEtal2016,Diethelm2010,KilbasEtal2006,MillerRoss1993,Podlubny1999}.

A particularly interesting aspect in this connection that does not pertain to classical mathematical models using integer-order differential operators
has recently been discussed in the context of a number of applications in the life sciences \cite{BaleanuEtal2016,Diethelm2013,Diethelm2016}:
It appears that certain real world problems can be described by a system of fractional differential equations where each equation may have 
an order that differs from the orders of the other equations of the system.
We shall call such systems \emph{multi-order fractional differential systems}.

Among the published papers, it seems that the authors mainly concentrated on
approximating solutions of multi-order fractional differential
equations, see e.g.\ \cite{AtabakzadehEtal2013,DiethelmFord2004,El-MesiryEtal2005,GejjiJafari2007,HesameddiniAsadollahifard2015,Li2010,Liu2015,SardarEtal2009,SweilamEtal2007,YangEtal2010,Yang2013}. 
The investigation of the analytical properties of such systems is often restricted to the case where the orders of
the differential operators are rational \cite{DengEtal2007,Diethelm2008,Diethelm2010,ZhangEtal2013}.
For the general case, rigorous mathematical studies of even the most fundamental questions in this context do not seem to be readily available.

Therefore, in this paper we consider $d$-dimensional linear multi-order fractional differential equation systems
\begin{equation}\label{eq:system}
   D_{*}^{\alpha_i} x_i(t)
   =
   \sum_{j = 1}^d a_{ij} x_j(t)
   +
   g_i(t), 
   \qquad
   i = 1, 2, \ldots, d,
\end{equation}
with orders $\alpha_i \in (0,1]$, coefficients $a_{ij} \in \C$, $g_i
\colon [0,\infty) \rightarrow \C$ continuous, $i, j = 1, \dots, d$, and the Caputo
differential operator of order $\alpha > 0$ 
\begin{equation*}
   D_{*}^{\alpha} y(t)
   \coloneqq
   J^{\lceil \alpha \rceil - \alpha} D^{\lceil \alpha \rceil} y(t)
\end{equation*}
which is defined for $C^{\lceil \alpha \rceil}$ functions $y \colon [0, T] \rightarrow \C^d$, $T>0$, with the classical derivative $D$ and the Riemann-Liouville operator 
\begin{equation*}
   J^{\beta} y(t) 
   \coloneqq
   \frac{1}{\Gamma(\beta)} \int_{0}^{t} (t-s)^{\beta - 1} y(s) \,ds
\end{equation*}
for $\beta > 0$ and $J^0 y(t) \coloneqq y(t)$ (see e.g.\ \cite{Diethelm2010}).
Note that $D_{*}^{\alpha} y$ can also be defined for not necessarily
differentiable functions,
e.g.\ if $\alpha \in (0,1)$, for continuous functions $y$ for which 
$\lim_{t \to 0} t^{- \alpha} (v(t) - v(0))$ exists, is finite, and 
$
\lim_{\theta \uparrow 1} \sup_{t \in [0,T]} |\int_{\theta t}^{t} (t-s)^{-(\alpha + 1)} (v(t) - v(s)) \,ds| = 0
$, 
cf.\ \cite[Theorem 5.2]{Vainikko2016}.

For convenience, we use the notation
\begin{equation}
  D_*^{(\alpha_1, \ldots, \alpha_d)} x(t) 
  \coloneqq    
  \begin{pmatrix}
      D_{*}^{\alpha_1} 
   \\
      & \ddots
   \\
      & & D_{*}^{\alpha_d}
   \end{pmatrix}
   x(t)
   =
  \begin{pmatrix}
      D_{*}^{\alpha_1} x_1(t) 
   \\
      \vdots
   \\
      D_{*}^{\alpha_d} x_d(t)
   \end{pmatrix}.
\end{equation}
With $A = (a_{ij})_{i, j = 1, \dots, d} \in \C^{d \times d}$, $g = (g_1,\dots,g_d)^{\top}$, $x(t) = (x_1(t), \dots, x_d(t))^{\top}$, 
\eqref{eq:system} can then be rewritten as 
\begin{equation}\label{eq:system2}
   \begin{pmatrix}
      D_{*}^{\alpha_1} 
   \\
      & \ddots
   \\
      & & D_{*}^{\alpha_d}
   \end{pmatrix}
   x(t)
   =
   A x(t) + g(t).
\end{equation}
Of central importance are the two-parameter Mittag-Leffler functions $E_{\alpha, \beta} \colon \C \to \C$, $\alpha > 0$, $\beta \ge 0$, with
\begin{equation}
  \label{eq:ml2}
  E_{\alpha, \beta} (z) \coloneqq \sum_{j=0}^\infty \frac {z^j} {\Gamma(\alpha j + \beta)}
\end{equation}
and the one-parameter Mittag-Leffler functions $E_\alpha \colon \C \to \C$, $\alpha > 0$, defined by $E_{\alpha} \coloneqq E_{\alpha, 1}$ (see e.g.\ \cite{GorenfloEtal2014}).

The structure of the paper is as follows. In Section 2, we first introduce a
result on the existence and uniqueness of solutions to multi-order fractional
differential equations. Then, we give a representation of solutions to
homogeneous linear multi-order fractional differential equations in series
form. Section 3 is devoted to the study of the asymptotic behavior of solutions of linear multi-order
fractional differential equations. More precisely, we obtain some criterion on the asymptotic behavior of solutions to these equations. 
Some auxiliary results concerning the Mittag-Leffler functions 
and the asymptotic behavior of solutions of scalar linear fractional differential equations are shown in Appendix \ref{sec:appendix}.


\section{Fundamental theory of multi-order fractional differential equations}

In this section we provide some fundamental results regarding multi-order fractional differential equations.
Specifically, we shall prove a Picard-Lindel\"of type existence and uniqueness result in Subsection \ref{subs:existence-unique},
and Subsection \ref{subs:structure} will be devoted to a description of the structure of the associated solutions in the linear case.

\subsection{Existence and uniqueness of solutions to a class of multi-order fractional differential equations}
\label{subs:existence-unique}

Let $T > 0$. In this subsection we consider the existence and uniqueness of solutions to the multi-order fractional differential equation
\begin{equation}\label{eq_ex}
  D^{\alpha}_{*}x(t) = f(t, x(t)),\quad t\in (0,T],
\end{equation}
where $\alpha \coloneqq (\alpha_1,\dots,\alpha_d) \in (0,1]^d$ and
$f = (f_1,\dots,f_d)^{\top} \colon [0,T] \times \C^d \rightarrow \C^d$ is continuous. With similar arguments as in \cite[Chapter~6]{Diethelm2010} or
\cite[\S 3.5]{KilbasEtal2006}, one can show that for $x_0 = (x^0_1,\dots,x^0_d)^{\top} \in \C^d$ and a continuous function $x : [0, T] \to \C^d$ for which $D^{\alpha}_{*}x$ is defined (cf.\ \cite[Theorem 5.2]{Vainikko2016}), the following two statements are equivalent:
\begin{enumerate}
   \item[(i)] $x$ satisfies the $d$-dimensional differential equation system \eqref{eq_ex} together with the initial condition $x(0) = x_0$,
   \item[(ii)] $x$ satisfies the Volterra integral equation
      \begin{equation}\label{equi_eq}
         x(t) = x_0 + J^{\alpha} \big[f(\cdot, x(\cdot))\big] (t) \quad \forall t\in [0,T]
      \end{equation}
      where
      $J^{\alpha} [f(\cdot, x(\cdot))] (t) 
      \coloneqq \big(J^{\alpha_1} \big[f_1(\cdot, x(\cdot))\big] (t), 
      \dots, J^{\alpha_d} \big[f_d(\cdot, x(\cdot))\big] (t)\big)^{\top}$.
\end{enumerate}
Following the usual convention, we define solutions of \eqref{eq_ex} by considering \eqref{equi_eq}  for continuous functions.

\begin{definition}
  \label{def:solution}
  A continuous function $x : [0, T] \to \C^d$ is called a \emph{solution} to the differential
  equation \eqref{eq_ex} with initial condition $x(0) = x_0$
  if it satisfies the integral equation \eqref{equi_eq}.
\end{definition}

\begin{remark}
  Because we assume the function $f$ to be continuous, we can see that, for every solution $x$ of \eqref{eq_ex} 
  in the sense of Definition \ref{def:solution}, the function $f(\cdot, x(\cdot))$ is continuous, too. 
  Therefore, in view of the fact that the solution $x$ satisfies the integral equation \eqref{equi_eq}, 
  it follows for $i \in \{ 1, 2, \ldots, d \}$ that the $i$-th component of $x$ can be written as the sum of a constant and the 
  Riemann-Liouville integral of order $\alpha_i$ of a continuous function. 
  Using the arguments of \cite[proof of Theorem 3.7]{Diethelm2010}, we can then conclude that 
  $x_i$ fulfils the conditions of \cite[Theorem 5.2]{Vainikko2016} and thus that $D^{\alpha_i}_* x_i$ exists and is continuous.
  Therefore, under the continuity assumption on $f$, a solution to  \eqref{eq_ex} 
  in the sense of Definition \ref{def:solution} is automatically a strong solution to the differential equation in the classical sense.
\end{remark}

Our basic assumption on the given function $f$ will be that all its components 
$f_i : [0,T] \times \C^d\rightarrow \C$ are
continuous and satisfy a Lipschitz condition with respect to the second
variable, i.e.
\begin{equation}\label{Lip_cond}
  |f_i(t,y) - f_i(t,z)| \leq L \|y - z\| \quad \forall t \in [0,T], \; y, z \in \C^d
\end{equation}
with some constant $L>0$, where $\|\cdot\|$ is the max norm on $\C^d$, i.e.,
$\|y\|=\max \{ |y_1|, \dots, |y_d| \}$ for all
$y = (y_1, \dots, y_d)^{\top} \in \C^d$. 

We are now in a position to formulate a result on unique existence of solutions of initial value problems.

\begin{theorem}
  \label{thm:picard-lindeloef}
  Consider the equation \eqref{eq_ex}. Assume that the function $f$ is
  continuous and satisfies the Lipschitz condition \eqref{Lip_cond}. 
  Then, for any $x_0 = (x^0_1, \dots, x^0_d)^{\top} \in \C^d$, 
  the differential equation \eqref{eq_ex} has a unique solution in $C([0,T]; \C^d)$
  that satisfies the initial condition $x(0) = x_0$.
\end{theorem}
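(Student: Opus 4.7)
The plan is to recast \eqref{eq_ex} as the fixed point problem \eqref{equi_eq} on the Banach space $C([0,T];\C^d)$ and apply Banach's contraction mapping principle. Concretely, I define the operator
\begin{equation*}
   \mathcal{T} \colon C([0,T];\C^d) \to C([0,T];\C^d),
   \qquad
   (\mathcal{T}x)(t) \coloneqq x_0 + J^{\alpha}\bigl[f(\cdot, x(\cdot))\bigr](t),
\end{equation*}
and first check that $\mathcal{T}$ is well-defined, i.e.\ that $\mathcal{T}x$ is continuous whenever $x$ is. This follows componentwise because $f_i(\cdot,x(\cdot))$ is continuous on $[0,T]$ and the Riemann-Liouville integral $J^{\alpha_i}$ of a continuous function is continuous. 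By the equivalence of (i) and (ii) recorded before Definition~\ref{def:solution}, uniqueness and existence of a solution to the initial value problem is the same as existence and uniqueness of a fixed point of $\mathcal{T}$.

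The main technical step is to make $\mathcal{T}$ a contraction. Rather than iterating on short subintervals (which is awkward because the sizes of the intervals would have to be chosen separately for each order $\alpha_i$), I would equip $C([0,T];\C^d)$ with a Bielecki-type weighted norm
\begin{equation*}
   \|x\|_{\lambda} \coloneqq \sup_{t\in[0,T]} e^{-\lambda t}\|x(t)\|,
   \qquad \lambda > 0,
\end{equation*}
which is equivalent to the usual sup-norm and therefore yields the same Banach space. Using the Lipschitz condition \eqref{Lip_cond}, for each $i$ and each $t\in[0,T]$ I would estimate
\begin{equation*}
   e^{-\lambda t}\bigl|(\mathcal{T}x)_i(t) - (\mathcal{T}y)_i(t)\bigr|
   \le
   \frac{L}{\Gamma(\alpha_i)}\int_0^t (t-s)^{\alpha_i-1} e^{-\lambda(t-s)}\, e^{-\lambda s}\|x(s)-y(s)\|\,ds
   \le
   \frac{L}{\lambda^{\alpha_i}}\,\|x-y\|_{\lambda},
\end{equation*}
where in the last step I use the substitution $u = t-s$ and bound the incomplete integral by $\int_0^{\infty} u^{\alpha_i-1} e^{-\lambda u}\,du = \Gamma(\alpha_i)/\lambda^{\alpha_i}$. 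Taking the maximum over $i$ and the supremum over $t$ gives
\begin{equation*}
   \|\mathcal{T}x - \mathcal{T}y\|_{\lambda}
   \le
   \Bigl(\max_{1\le i\le d} \frac{L}{\lambda^{\alpha_i}}\Bigr) \|x-y\|_{\lambda}.
\end{equation*}

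Choosing $\lambda$ sufficiently large, e.g.\ $\lambda > \max_{i} L^{1/\alpha_i}$, makes the factor in front strictly less than $1$, so $\mathcal{T}$ is a contraction in the norm $\|\cdot\|_{\lambda}$. Banach's fixed point theorem then yields a unique fixed point $x\in C([0,T];\C^d)$, which is the unique solution of \eqref{eq_ex} with $x(0)=x_0$. The only slightly delicate point is that the exponents $\alpha_i$ appear inhomogeneously in the estimate; but since there are only finitely many of them and each satisfies $\lambda^{\alpha_i}\to\infty$ as $\lambda\to\infty$, the Bielecki trick tames them uniformly. The remaining work is the continuity verification in step one and the substitution-based bound on the incomplete gamma integral, both of which are routine.
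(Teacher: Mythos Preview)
Your proposal is correct and is essentially the same argument as the paper's own proof: both recast the initial value problem as a fixed point equation for the integral operator, equip $C([0,T];\C^d)$ with the Bielecki weight $e^{-\lambda t}$, derive the componentwise bound $L/\lambda^{\alpha_i}$ via the substitution $u=t-s$ and the incomplete gamma bound, and then choose $\lambda$ large enough that $\max_i L\lambda^{-\alpha_i}<1$ to invoke Banach's fixed point theorem.
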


\begin{proof}
  Let $\lambda > 0$ be a constant such that 
  \begin{equation*}
    \max_{1\leq i\leq d} L \lambda^{-\alpha_i} < 1.
  \end{equation*}
  On the space $C([0,T];\C^d)$ we define a new norm $\|\cdot\|_\lambda$ as
  \[
    \| \varphi \|_\lambda \coloneqq \sup_{0\leq t\leq T} \| \varphi(t) \| \exp (-\lambda t) .
  \]
  Using standard arguments, it is easy to see that $(C([0,T];\C^d), \|\cdot\|_\lambda)$ is a Banach space. 
  For any $x_0 = (x^0_1, \dots, x^0_d)^{\top} \in \C^d$, 
  we define an operator $\mathcal{T}_{x_0} : C([0,T]; \C^d) \rightarrow C([0,T];\C^d)$ by
  \[
    \mathcal{T}_{x_0}\varphi(t) 
    \coloneqq \big((\mathcal{T}_{x_0}\varphi)^1(t),\dots,(\mathcal{T}_{x_0}\varphi)^d(t)\big)^{\top},
  \]
  where for $1 \leq i \leq d$
  \begin{equation*}\label{define_op}
    (\mathcal{T}_{x_0}\varphi)^i(t)
    \coloneqq x^0_i + \frac{1}{\Gamma(\alpha_i)} \int_0^t (t-\tau)^{\alpha_i-1} f_i(\tau,\varphi(\tau)) \;d \tau 
    \quad \forall t\in [0,T], \varphi\in  C([0,T];\C^d).
  \end{equation*}
  We see that for every $\varphi, \hat \varphi \in  C([0,T];\C^d)$, every $t \in [0,T]$ and all
  $1 \leq i \leq d$,
  \begin{align}
    \notag\frac{|(\mathcal{T}_{x_0}\varphi)^i(t)-(\mathcal{T}_{x_0}\hat{\varphi})^i(t)|}{\exp{(\lambda t)}}&\leq \frac{L}{\Gamma(\alpha_i)\exp{(\lambda t)}}\int_0^t (t-\tau)^{\alpha_i-1}\exp{(\lambda \tau)}\frac{\|\varphi(\tau)-\hat{\varphi}(\tau)\|}{\exp{(\lambda \tau)}}\;d\tau\\
    \notag&\leq \frac{L}{\Gamma(\alpha_i)\exp{(\lambda t)}}\int_0^t (t-\tau)^{\alpha_i-1}\exp{(\lambda \tau)}\;d\tau \cdot \sup_{0\leq \theta \leq T}\frac{\|\varphi(\theta)-\hat{\varphi}(\theta)\|}{\exp{(\lambda \theta)}}\\
    \notag&\leq \frac{L}{\Gamma(\alpha_i)}\int_0^t u^{\alpha_i-1}\exp{(-\lambda u)}\;du \cdot \| \varphi - \hat \varphi \|_\lambda \\
    \notag& =  \frac{L}{\Gamma(\alpha_i)\lambda^{\alpha_i}}\int_0^{\lambda t} v^{\alpha_i-1}\exp{(-v)}\;dv \cdot \| \varphi - \hat \varphi \|_\lambda \\
    \notag& \leq  \frac{L}{\Gamma(\alpha_i)\lambda^{\alpha_i}}\int_0^{\infty} v^{\alpha_i-1}\exp{(-v)}\;dv \cdot \| \varphi - \hat \varphi \|_\lambda \\
    &= \frac{L}{\lambda^{\alpha_i}} \|\varphi-\hat{\varphi}\|_\lambda . \label{es_eq}
  \end{align}
  It is clear that the operator $\mathcal{T}_{x_0}$ maps the space $(C([0,T];\C^d),\|\cdot\|_\lambda)$ to itself; moreover, from \eqref{es_eq} we obtain the estimate
  \[
    \| \mathcal{T}_{x_0} \varphi - \mathcal{T}_{x_0} \hat{\varphi} \|_\lambda
    \leq \frac{L}{\lambda^{\alpha_i}} \cdot \|\varphi-\hat{\varphi}\|_\lambda
    \quad \forall \varphi, \hat{\varphi} \in C([0,T];\C^d)
  \]
  which, by definition of $\lambda$, shows that this operator is a contractive mapping on this space. 
  Due to the fact that $(C([0,T];\C^d), \|\cdot\|_\lambda)$ is a Banach space, by Banach's fixed point theorem, 
  there exists a unique fixed point $\varphi$ of $\mathcal{T}_{x_0}$ in this space. 
  This fixed point is the unique solution of the Volterra equation \eqref{equi_eq} and hence, as stated above, 
  also the unique solution to the initial value problem consisting of the differential equation \eqref{eq_ex} 
  and the initial condition $x(0) = x_0$ in $C([0, T]; \C^d)$. The proof is complete.
\end{proof}

In Section \ref{sec:asymptotics} we shall look at the behavior of solutions to multi-order systems
as the independent variable goes to infinity. For this purpose, it is important to have an existence 
and uniqueness result that is not restricted to functions defined on bounded intervals.
Fortunately, the following result immediately follows from Theorem \ref{thm:picard-lindeloef}:

\begin{corollary}
  \label{cor:picard-lindeloef}
  Let $f : [0, \infty) \times \C^d \to \C^d$ be continuous and satisfy a Lipschitz condition with respect to the second variable.
  Moreover, let $\alpha \in (0,1]^d$ and $x_0 \in \C^d$.   
  Then, the initial value problem
  \[
    D_*^\alpha x(t) = f(t, x(t)) \quad (t > 0), \qquad x(0) = x_0,
  \] 
  has a unique solution in $C([0,\infty); \C^d)$.
\end{corollary}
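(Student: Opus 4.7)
The plan is to reduce the corollary to Theorem \ref{thm:picard-lindeloef} by a standard extension argument on an exhausting sequence of compact intervals.

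First, for each $T > 0$, I observe that the restriction of $f$ to $[0,T] \times \C^d$ is continuous and satisfies the Lipschitz condition \eqref{Lip_cond} with the same constant $L$ that governs $f$ on $[0,\infty) \times \C^d$. Thus Theorem \ref{thm:picard-lindeloef} applies and yields a unique solution $x_T \in C([0,T]; \C^d)$ of \eqref{equi_eq} with $x_T(0) = x_0$.

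Second, I would establish a consistency property: if $0 < T_1 < T_2$, then $x_{T_2}|_{[0, T_1]} = x_{T_1}$. Indeed, the restriction $x_{T_2}|_{[0, T_1]}$ is continuous, satisfies the integral equation \eqref{equi_eq} on $[0, T_1]$, and has initial value $x_0$; by the uniqueness part of Theorem \ref{thm:picard-lindeloef} applied on $[0, T_1]$, it must coincide with $x_{T_1}$.

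Third, I use this consistency to define a global function $x \colon [0,\infty) \to \C^d$ by $x(t) \coloneqq x_T(t)$ for any $T \geq t$; the definition is independent of the choice of $T$ by the previous step. Continuity of $x$ on $[0,\infty)$ is inherited from the continuity of each $x_T$, since continuity is a local property and every $t \in [0,\infty)$ lies in some open interval $[0, T)$. Because the integral in \eqref{equi_eq} depends only on values of $x$ on $[0,t]$, the equation $x(t) = x_0 + J^\alpha[f(\cdot,x(\cdot))](t)$ holds for every $t \geq 0$. Hence $x \in C([0,\infty); \C^d)$ is a solution of the initial value problem.

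Finally, uniqueness in $C([0,\infty); \C^d)$ follows immediately: if $\tilde x$ is any such solution, its restriction to $[0, T]$ satisfies the hypotheses of Theorem \ref{thm:picard-lindeloef} and therefore equals $x_T$; since this holds for every $T > 0$, we conclude $\tilde x = x$. There is no real obstacle here — the only point worth being careful about is to cite the uniqueness statement of Theorem \ref{thm:picard-lindeloef} both for the consistency step and for the final global uniqueness argument, and to note that the global Lipschitz bound on $[0,\infty) \times \C^d$ is needed to ensure each truncated problem inherits the Lipschitz hypothesis uniformly.
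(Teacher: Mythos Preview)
Your argument is correct and is exactly the standard extension-by-uniqueness procedure that the paper has in mind: the paper does not spell out a proof but simply states that the corollary ``immediately follows from Theorem~\ref{thm:picard-lindeloef}'', and your consistency-and-gluing argument over the intervals $[0,T]$ is precisely the way to make that remark rigorous.
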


\subsection{A representation of solutions to homogeneous linear multi-order fractional differential equations}
\label{subs:structure}

In this subsection we concentrate on a particularly important and fundamental special case of the class of differential equations discussed 
in Subsection \ref{subs:existence-unique}, namely we shall look at the solutions to homogeneous linear equations with constant coefficients, 
i.e.\ to differential equations of the form
\begin{equation}\label{eq:system2hom}
   \begin{pmatrix}
      D_{*}^{\alpha_1} 
   \\
      & \ddots
   \\
      & & D_{*}^{\alpha_d}
   \end{pmatrix}
   x(t)
   =
   A x(t)
\end{equation}
which is the special case of \eqref{eq:system2} where $g(t) = 0$ for all $t$.

Our basic result in this section, Theorem \ref{exist_Theorem}, provides some information about the structure of the solutions to the system \eqref{eq:system2hom} 
in the case of an arbitrary matrix $A \in \C^{d \times d}$ and an arbitrary vector $(\alpha_1, \ldots, \alpha_d) \in (0, 1]^d$.

In order to motivate our results, we start with the case $d = 2$. In this case, the system \eqref{eq:system2hom} has the form
\begin{subequations}
  \begin{align}
    D_*^{\alpha_1} x_1(t) & = a_{11} x_1(t) + a_{12} x_2(t), \\
    D_*^{\alpha_2} x_2(t) & = a_{21} x_1(t) + a_{22} x_2(t).
  \end{align}
\end{subequations}
First of all, Corollary \ref{cor:picard-lindeloef} asserts that, for any initial condition $(x_1(0), x_2(0))^\top = (x_1^0, x_2^0)^\top \in \C^2$, 
this system has a unique continuous solution $x = (x_1, x_2)^\top$ on $[0, \infty)$. 
Moreover, for equations of this structure, the fractional version of the variation-of-constants method 
\cite[Theorem~7.2 and Remark~7.1]{Diethelm2010} provides the relations
\begin{subequations}
  \label{eq:solution-general-2}
  \begin{align}
    x_1(t) 
    & = 
    x_1^0 E_{\alpha_1} (a_{11} t^{\alpha_1}) 
    + a_{12} \int_0^t (t-s)^{\alpha_1 - 1} E_{\alpha_1, \alpha_1} \left( a_{11} (t-s)^{\alpha_1} \right) x_2(s) \, ds, \\
    x_2(t) 
    & =  
    x_2^0 E_{\alpha_2} (a_{22} t^{\alpha_2}) 
    + a_{21} \int_0^t (t-s)^{\alpha_2 - 1} E_{\alpha_2, \alpha_2} \left( a_{22} (t-s)^{\alpha_2} \right) x_1(s) \, ds, 
  \end{align}
\end{subequations}
for all $t \ge 0$. 
This representation indicates that we should seek the solution components in the class of generalized power series of the form
\begin{subequations}
  \label{eq:powerseries}
  \begin{align}
    x_1(t) & =  x_1^0 + \sum_{k=1}^\infty \sum_{\ell=0}^\infty b_{1 k \ell} t^{k \alpha_1 + \ell \alpha_2}, \\
    x_2(t) & =  x_2^0 + \sum_{k=0}^\infty \sum_{\ell=1}^\infty b_{2 k \ell} t^{k \alpha_1 + \ell \alpha_2}.
  \end{align}
\end{subequations}
Assuming a suitable convergence behavior of these series, we may differentiate in a termwise manner and obtain
\begin{align*}
  D_*^{\alpha_1} x_1(t) 
  & = 
  \sum_{k=1}^\infty \sum_{\ell=0}^\infty 
       b_{1 k \ell} \frac {\Gamma(k \alpha_1 + \ell \alpha_2 + 1)} {\Gamma((k-1) \alpha_1 + \ell \alpha_2 + 1)} t^{(k-1) \alpha_1 + \ell \alpha_2} \\
  & =  
    \sum_{k=0}^\infty \sum_{\ell=0}^\infty 
       b_{1, k+1, \ell} \frac {\Gamma((k+1) \alpha_1 + \ell \alpha_2 + 1)} {\Gamma(k \alpha_1 + \ell \alpha_2 + 1)} t^{k \alpha_1 + \ell \alpha_2},
  \\
  D_*^{\alpha_2} x_2(t) 
  & =  
  \sum_{k=0}^\infty \sum_{\ell=1}^\infty 
       b_{2 k \ell} \frac {\Gamma(k \alpha_1 + \ell \alpha_2 + 1)} {\Gamma(k \alpha_1 + (\ell - 1) \alpha_2 + 1)} t^{k \alpha_1 + (\ell - 1) \alpha_2} \\
  & =  
    \sum_{k=0}^\infty \sum_{\ell=0}^\infty 
       b_{2, k, \ell+1} \frac {\Gamma(k \alpha_1 + (\ell + 1) \alpha_2 + 1)} {\Gamma(k \alpha_1 + \ell \alpha_2 + 1)} t^{k \alpha_1 + \ell \alpha_2}.
\end{align*}
Plugging these representations into the differential equation system \eqref{eq:system2hom}, we find
\begin{align*}
  \MoveEqLeft
  a_{11} x_1^0 + a_{11} \sum_{k=1}^\infty \sum_{\ell=0}^\infty b_{1 k \ell} t^{k \alpha_1 + \ell \alpha_2}  
   + a_{12} x_2^0 + a_{12} \sum_{k=0}^\infty \sum_{\ell=1}^\infty b_{2 k \ell} t^{k \alpha_1 + \ell \alpha_2}  \\
   = &  \sum_{k=0}^\infty \sum_{\ell=0}^\infty 
       b_{1, k+1, \ell} \frac {\Gamma((k+1) \alpha_1 + \ell \alpha_2 + 1)} {\Gamma(k \alpha_1 + \ell \alpha_2 + 1)} t^{k \alpha_1 + \ell \alpha_2}, \\
  \MoveEqLeft 
   a_{21} x_1^0 + a_{21} \sum_{k=1}^\infty \sum_{\ell=0}^\infty b_{1 k \ell} t^{k \alpha_1 + \ell \alpha_2}  
   + a_{22} x_2^0 + a_{22} \sum_{k=0}^\infty \sum_{\ell=1}^\infty b_{2 k \ell} t^{k \alpha_1 + \ell \alpha_2}  \\
   = &  \sum_{k=0}^\infty \sum_{\ell=0}^\infty 
       b_{2, k, \ell+1} \frac {\Gamma(k \alpha_1 + (\ell + 1) \alpha_2 + 1)} {\Gamma(k \alpha_1 + \ell \alpha_2 + 1)} t^{k \alpha_1 + \ell \alpha_2}.
\end{align*}
A comparison of coefficients of $t^{k \alpha_1 + \ell \alpha_2}$ then yields the equations
\begin{subequations}
  \label{eq:recurrence}
  \begin{align}
    b_{110} & =  \frac 1 {\Gamma(\alpha_1 + 1)} (a_{11} x_1^0 + a_{12} x_2^0), \label{eq:recurrence-a} \\
    b_{201} & =  \frac 1 {\Gamma(\alpha_2 + 1)} (a_{21} x_1^0 + a_{22} x_2^0), \label{eq:recurrence-b} \\
    b_{1,k+1,0} & = \frac {\Gamma(k \alpha_1 + 1)} {\Gamma((k+1) \alpha_1 + 1)} a_{11} b_{1k0} \qquad (k = 1, 2, \ldots), \label{eq:recurrence-c} \\
    b_{1,1,\ell} & = \frac {\Gamma(\ell \alpha_2 + 1)} {\Gamma(\alpha_1 + \ell \alpha_2 + 1)} a_{12} b_{20\ell} \qquad (\ell = 1, 2, \ldots), \label{eq:recurrence-d} \\
    b_{1,k+1,\ell} 
       & = 
       \frac {\Gamma(k \alpha_1 + \ell \alpha_2 + 1)} {\Gamma((k+1) \alpha_1 + \ell \alpha_2 + 1)} (a_{11} b_{1k\ell} + a_{12} b_{2k\ell})
       \qquad (k = 1, 2, \ldots; \ell = 1, 2, \ldots), \label{eq:recurrence-e} \\
    b_{2,0,\ell+1} & = \frac {\Gamma(\ell \alpha_2 + 1)} {\Gamma((\ell+1) \alpha_2 + 1)} a_{22} b_{20\ell} \qquad (\ell = 1, 2, \ldots), \label{eq:recurrence-f} \\
    b_{2,k,1} & = \frac {\Gamma(k \alpha_1 + 1)} {\Gamma(k \alpha_1 + \alpha_2 + 1)} a_{21} b_{1k0} \qquad (k = 1, 2, \ldots), \label{eq:recurrence-g} \\
    b_{2,k,\ell+1} 
       & = 
       \frac {\Gamma(k \alpha_1 + \ell \alpha_2 + 1)} {\Gamma(k \alpha_1 + (\ell + 1) \alpha_2 + 1)} (a_{21} b_{1k\ell} + a_{22} b_{2k\ell}) 
       \qquad (k = 1, 2, \ldots; \ell = 1, 2, \ldots). \label{eq:recurrence-h}
  \end{align}
\end{subequations}
Formally introducing the quantities
\begin{subequations}
  \label{eq:recurrence2}
  \begin{equation}
    \label{eq:recurrence2-a}
    \begin{aligned}
    b_{10\ell} & =  0     & \mbox{ for } \ell = 1, 2, \ldots \quad
    & \mbox{ and } & 
    b_{2k0} & =  0 & \mbox{ for } k = 1,2,\ldots, & \\
    b_{100} & =     x_1^0 & 
    & \mbox{ and } & 
    b_{200} & =  x_2^0, & 
    \end{aligned}
  \end{equation}
  we see that the system \eqref{eq:recurrence} can be simplified to
  \begin{align}
    b_{1,k+1,\ell} 
       &=
       \frac {\Gamma(k \alpha_1 + \ell \alpha_2 + 1)} {\Gamma((k+1) \alpha_1 + \ell \alpha_2 + 1)} (a_{11} b_{1k\ell} + a_{12} b_{2k\ell})
       \qquad (k, \ell = 0, 1, 2, \ldots) , \label{eq:recurrence2-c} \\
    b_{2,k,\ell+1} 
       &= 
       \frac {\Gamma(k \alpha_1 + \ell \alpha_2 + 1)} {\Gamma(k \alpha_1 + (\ell + 1) \alpha_2 + 1)} (a_{21} b_{1k\ell} + a_{22} b_{2k\ell}) 
       \qquad (k, \ell = 0, 1, 2, \ldots). \label{eq:recurrence2-d}
  \end{align}
\end{subequations}
A brief inspection of these formulas reveals that, given the initial values $x_1^0$ and $x_2^0$, 
they can indeed be used to compute all coefficients that appear in the representation \eqref{eq:powerseries} in a recursive manner. 
Specifically, the coefficients $b_{1k\ell}$ and $b_{2k\ell}$ for $k + \ell = \mu$ can be computed via eqs.\ \eqref{eq:recurrence2-c} and \eqref{eq:recurrence2-d}, 
respectively, and this computation only requires the knowledge of $b_{1k\ell}$ and $b_{2k\ell}$ with $k + \ell = \mu - 1$. 
Thus one can first compute all $b_{1k\ell}$ and $b_{2k\ell}$ with $k + \ell = 1$, then with $k + \ell = 2$, etc.

A closer look at the recurrence relations \eqref{eq:recurrence2} allows us to prove that the series from \eqref{eq:powerseries} converge for all $t \ge 0$.
To this end we first state a preliminary result.

\begin{lemma}
  \label{lemma:general-system-series-convergence}
  Let the values $b_{1 k \ell}$ and $b_{2 k \ell}$ be defined as in \eqref{eq:recurrence2} with arbitrary $x_1^0, x_2^0 \in \C$.
  Then, for $j \in \{ 1, 2 \}$ the series 
  \begin{equation*}
    s_j(z) \coloneqq \sum_{k=0}^\infty \sum_{\ell=0}^\infty |b_{j k \ell}| z^{k + \ell}
  \end{equation*}
  is convergent for all $z \in \C$.
\end{lemma}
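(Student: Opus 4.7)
The strategy is to establish an a priori bound on the coefficients of Mittag-Leffler type, namely
\[
  |b_{jk\ell}| \leq \frac{C\, M^{k+\ell}}{\Gamma(k\alpha_1 + \ell\alpha_2 + 1)}
  \qquad (j \in \{1,2\},\; k, \ell \geq 0),
\]
for suitable constants $C, M > 0$ depending only on the initial data and on $a := \max_{i,j} |a_{ij}|$, and then to majorize $s_j(z)$ by a one-variable series whose convergence for every $z \in \C$ follows from the growth of the Gamma function. Since the coefficients $|b_{jk\ell}|$ are non-negative, it suffices to treat $z$ as a non-negative real number throughout.

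For the a priori bound I would take $C := \max(|x_1^0|,|x_2^0|)$ and $M := 2a$, handling the degenerate case $a = 0$ separately, in which the recurrences force $b_{jk\ell} = 0$ for all $k + \ell \geq 1$. The proof is by induction on $\mu := k + \ell$, with the base case $\mu = 0$ immediate from \eqref{eq:recurrence2-a}. In the step from $\mu$ to $\mu + 1$, each new coefficient is produced by \eqref{eq:recurrence2-c} or \eqref{eq:recurrence2-d} from two coefficients at level $\mu$, except for the boundary cases already set to zero in \eqref{eq:recurrence2-a}, where the bound holds trivially. The key observation is that the Gamma factor in the numerator of the recurrence ratio is precisely $\Gamma(k\alpha_1 + \ell\alpha_2 + 1)$, which cancels exactly against the Gamma denominators supplied by the inductive hypothesis applied to $b_{1k\ell}$ and $b_{2k\ell}$; the resulting linear combination is bounded in modulus by $2a$, which the choice $M = 2a$ absorbs in the factor $M^{\mu+1}$.

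Given the bound, set $\alpha_{\min} := \min(\alpha_1,\alpha_2)$ and regroup the double series by $n := k + \ell$ to obtain
\[
  s_j(z) \leq C \sum_{n=0}^\infty (Mz)^n \sum_{k=0}^n \frac{1}{\Gamma(k\alpha_1 + (n-k)\alpha_2 + 1)}.
\]
Since $\Gamma$ is increasing on $[2,\infty)$ and the smallest possible argument occurring in the inner sum is $n\alpha_{\min} + 1$, for all sufficiently large $n$ the inner sum is at most $(n+1)/\Gamma(n\alpha_{\min} + 1)$. The resulting one-variable series converges for every $z \geq 0$ by the root test, since Stirling's formula yields $\Gamma(n\alpha_{\min} + 1)^{1/n} \to \infty$ so that the $n$-th roots of the terms tend to zero. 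The only delicate step in the plan is the bookkeeping in the induction, where one must verify that the Gamma factors cancel precisely so that the bound propagates without accumulating any extra growth; once this is checked, the rest of the argument is a routine Mittag-Leffler-type majorization.
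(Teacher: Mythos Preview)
Your argument is correct, and in fact slightly cleaner than the paper's. The paper aggregates first, setting $\beta_{jk} \coloneqq \sum_{\mu=0}^k |b_{j,\mu,k-\mu}|$, and then proves a recurrence inequality
\[
  \beta_{1k} + \beta_{2k} \le 2\bar a\,\frac{\Gamma((k-1)\alpha^*+1)}{\Gamma(k\alpha^*+1)}\,(\beta_{1,k-1}+\beta_{2,k-1}),
  \qquad \alpha^* = \min(\alpha_1,\alpha_2),
\]
for large $k$. Establishing this requires a nontrivial uniform-in-$\mu$ bound on the weights $w_{k,\mu}(\alpha_1,\alpha_2) = \frac{\Gamma((\mu-1)\alpha_1+(k-\mu)\alpha_2+1)}{\Gamma(\mu\alpha_1+(k-\mu)\alpha_2+1)} + \frac{\Gamma((\mu-1)\alpha_1+(k-\mu)\alpha_2+1)}{\Gamma((\mu-1)\alpha_1+(k-\mu+1)\alpha_2+1)}$, obtained via Stirling's formula and a monotonicity argument in $\mu$. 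From the recurrence the paper then extracts the bound $\beta_{1k}+\beta_{2k} \le c_1 c_2^k / \Gamma(k\alpha^*+1)$, which is exactly a Mittag-Leffler coefficient.

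You instead keep track of each $b_{jk\ell}$ separately with the ``natural'' denominator $\Gamma(k\alpha_1+\ell\alpha_2+1)$, so that the Gamma ratio in \eqref{eq:recurrence2-c}--\eqref{eq:recurrence2-d} cancels exactly and the induction step is a one-line computation. The passage to $\alpha_{\min}$ is postponed to the summation stage, where it becomes a simple monotonicity of $\Gamma$ on $[2,\infty)$. The price is a harmless extra factor $(n+1)$ in the majorant; the gain is that no asymptotic analysis (Stirling, monotonicity of $\Gamma(z)/\Gamma(z+\gamma)$) is needed inside the induction. Both routes end at essentially the same Mittag-Leffler majorant $E_{\alpha^*}(c\,|z|)$.
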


Actually it is immediately clear that the desired convergence property is a consequence of this lemma, since the series
\begin{equation*}
  \sum_{k=0}^\infty \sum_{\ell=0}^\infty |b_{j k \ell}| \cdot |t|^{k \alpha_1 + \ell \alpha_2}
\end{equation*}
is, on the one hand, a majorant for $x_j(t)$ and is, on the other hand, convergent for all $t > 0$ according to
\begin{equation*}
  \sum_{k=0}^\infty \sum_{\ell=0}^\infty |b_{j k \ell}| \cdot |t|^{k \alpha_1 + \ell \alpha_2}
  \le
     \begin{cases}
       \displaystyle
       \sum_{k=0}^\infty \sum_{\ell=0}^\infty |b_{j k \ell}| \cdot |t|^{(k+\ell) \max \{\alpha_1, \alpha_2\}}
           = s_j(t^{\max \{\alpha_1, \alpha_2\}}) & \mbox{for } t \ge 1, \smallskip \\
       \displaystyle
       \sum_{k=0}^\infty \sum_{\ell=0}^\infty |b_{j k \ell}| \cdot |t|^{(k+\ell) \min \{\alpha_1, \alpha_2\}}
           = s_j(t^{\min \{\alpha_1, \alpha_2\}}) & \mbox{for } t < 1.
     \end{cases}
\end{equation*}

\begin{proof}[Proof of Lemma \ref{lemma:general-system-series-convergence}]
  Since the series in question does not have any negative summands, we may rearrange the terms according to powers of $z$; this yields
  \begin{equation*}
    s_j(z) = \sum_{k=0}^\infty \sum_{\mu=0}^k |b_{j, \mu, k-\mu}| z^k.
  \end{equation*}
  It is therefore evident that, in order to investigate the convergence radius of this series, we need to estimate
  expressions of the form 
  \begin{equation*}
    \beta_{jk} \coloneqq \sum_{\mu=0}^k |b_{j, \mu, k-\mu}|.
  \end{equation*}
  In fact, we shall demonstrate that for sufficiently large $k$
  \begin{equation}
    \label{eq:bound-beta}
    0 \le \beta_{1k} + \beta_{2k} \le \frac {c_1 c_2^k} {\Gamma (k \alpha^* + 1)},
  \end{equation}
  where $c_1$ and $c_2$ are certain positive constants and
  \begin{equation*}
    \alpha^* \coloneqq \min \{ \alpha_1, \alpha_2 \}.
  \end{equation*}
  Equation \eqref{eq:bound-beta} tells us that the classical power series for the Mittag-Leffler function $E_{\alpha^*}$
  --- that is well known to be convergent on the entire complex plane --- evaluated at $c_2 |z|$ is a majorant for the series $s_1$ and $s_2$ that we
  are interested in, and hence the series expansions for $s_1(z)$ and $s_2(z)$ also converge for all $z$ as required.

  Thus, it only remains to prove \eqref{eq:bound-beta}. 
  The left inequality is clear by definition.
  To prove the right inequality, we employ the relations \eqref{eq:recurrence2-a}, \eqref{eq:recurrence2-c} 
  and \eqref{eq:recurrence2-d} and see,
  using the notation $\bar a := \max_{i,j \in \{ 1, 2 \}} |a_{ij}|$, that we have for $k \ge 2$ the following chain of inequalities:
  \begin{align*}
    \sum_{\mu=0}^k \left( |b_{1, \mu, k-\mu}|  + |b_{2, \mu, k-\mu}|  \right) 
    & \le   \sum_{\mu=1}^k  |b_{1, \mu, k-\mu}|  + \sum_{\mu=0}^{k-1} |b_{2, \mu, k-\mu}|  \\
    & \le  
       \bar a \sum_{\mu=1}^k \frac {\Gamma((\mu-1) \alpha_1 + (k-\mu) \alpha_2 + 1)} {\Gamma(\mu \alpha_1 + (k-\mu) \alpha_2 + 1)} 
                                         (|b_{1, \mu-1, k-\mu}|  + |b_{2, \mu-1, k-\mu}| ) \\
    & \phantom{\le} {} + \bar a \sum_{\mu=0}^{k-1} \frac {\Gamma(\mu \alpha_1 + (k-\mu-1) \alpha_2 + 1)} {\Gamma(\mu \alpha_1 + (k-\mu) \alpha_2 + 1)} 
                                         (|b_{1, \mu, k-\mu-1}|  + |b_{2, \mu, k-\mu-1}| ) \\
    & =  
       \bar a \sum_{\mu=1}^k \frac {\Gamma((\mu-1) \alpha_1 + (k-\mu) \alpha_2 + 1)} {\Gamma(\mu \alpha_1 + (k-\mu) \alpha_2 + 1)} 
                                         (|b_{1, \mu-1, k-\mu}|  + |b_{2, \mu-1, k-\mu}| ) \\
    & \phantom{\le} {} + \bar a \sum_{\mu=1}^{k} \frac {\Gamma((\mu-1) \alpha_1 + (k-\mu) \alpha_2 + 1)} {\Gamma((\mu-1) \alpha_1 + (k-\mu+1) \alpha_2 + 1)} 
                                         (|b_{1, \mu-1, k-\mu}|  + |b_{2, \mu-1, k-\mu}| ) \\
    & = \bar a \sum_{\mu=1}^k w_{k,\mu}(\alpha_1, \alpha_2)  (|b_{1, \mu-1, k-\mu}|  + |b_{2, \mu-1, k-\mu}| )
  \end{align*}
  with
  \begin{equation*}
     w_{k,\mu}(\alpha_1, \alpha_2) 
     = \frac {\Gamma((\mu-1) \alpha_1 + (k-\mu) \alpha_2 + 1)} {\Gamma(\mu \alpha_1 + (k-\mu) \alpha_2 + 1)} +
            \frac {\Gamma((\mu-1) \alpha_1 + (k-\mu) \alpha_2 + 1)} {\Gamma((\mu-1) \alpha_1 + (k-\mu+1) \alpha_2 + 1)}.
  \end{equation*}
  Both fractions on the right-hand side have the same numerator but their denominators differ by $\alpha_2 - \alpha_1$; 
  the well known monotonicity of the Gamma function thus allows us to conclude that, for sufficiently large $k$, we have
  \begin{equation}
    \label{eq:gamma1}
     w_{k,\mu}(\alpha_1, \alpha_2) 
     \le 2 \frac {\Gamma((\mu-1) \alpha_1 + (k-\mu) \alpha_2 + 1)} {\Gamma((\mu-1) \alpha_1 + (k-\mu) \alpha_2 + \alpha^* + 1)} 
     =   2 \frac {\Gamma(u + \mu (\alpha_1 - \alpha_2))} {\Gamma (u + \mu (\alpha_1 - \alpha_2) + \alpha^*)}
  \end{equation}
  with $u \coloneqq - \alpha_1 + k \alpha_2 + 1$. 
  For $\gamma > 0$ and $z \to \infty$, 
  Stirling's formula yields the asymptotic relation $\Gamma(z) / \Gamma(z + \gamma) = z^{-\gamma} (1 + o(1))$ which is monotonically decreasing in $z$.
  Hence, for sufficiently large $k$, the quotient on the right-hand side of \eqref{eq:gamma1} is monotonically decreasing with respect to $\mu$ for
  $\alpha_1 \ge \alpha_2$ and monotonically increasing with respect to $\mu$ if $\alpha_1 < \alpha_2$. 
  Therefore, the maximum of this expression over all admissible values of $\mu$ is attained at $\mu = 1$ if $\alpha_1 \ge \alpha_2$ and at 
  $\mu = k$ if $\alpha_1 < \alpha_2$. These observations may be summarized in the form
  \begin{equation*}
     w_{k,\mu}(\alpha_1, \alpha_2) 
     \le
     2 \frac {\Gamma( \frac{k-1}2 ( \alpha_1 + \alpha_2 - | \alpha_1 - \alpha_2|) + 1 )} {\Gamma(\frac{k-1}2 ( \alpha_1 + \alpha_2 - | \alpha_1 - \alpha_2| ) + \alpha^* + 1)}
     = 
     2 \frac {\Gamma((k-1) \alpha^* + 1)} {\Gamma(k \alpha^* + 1)},
  \end{equation*}
  and this implies
  \begin{align*}
    \beta_{1k} + \beta_{2k}
    & \le 
    2 \bar a \frac {\Gamma((k-1) \alpha^* + 1)} {\Gamma(k \alpha^* + 1)} \sum_{\mu=1}^k (|b_{1, \mu-1, k-\mu}|  + |b_{2, \mu-1, k-\mu}| ) \\
    & = 2 \bar a \frac {\Gamma((k-1) \alpha^* + 1)} {\Gamma(k \alpha^* + 1)} \sum_{\mu=0}^{k-1} (|b_{1, \mu, k-1-\mu}|  + |b_{2, \mu, k-1-\mu}| ) \\
    & = 2 \bar a \frac {\Gamma((k-1) \alpha^* + 1)} {\Gamma(k \alpha^* + 1)} ( \beta_{1,k-1} + \beta_{2,k-1}  )
  \end{align*}
  if $k$ is large enough. Thus, for a sufficiently large and fixed constant $N$ and arbitrary $k$, by induction, we deduce the estimate
  \[
    \beta_{1,N+k}+\beta_{2,N+k}\leq \frac{(2 \bar{a})^k \Gamma(N\alpha^*+1)}{\Gamma((N+k)\alpha^*+1)}(\beta_{1,N}+\beta_{2,N}),
  \]
  which shows \eqref{eq:bound-beta} and completes the proof of Lemma \ref{lemma:general-system-series-convergence}.
\end{proof}

The same ideas and methods can be applied if the dimension of the fractional differential equation system is greater than 2.
We summarize the findings as follows.

\begin{theorem}\label{exist_Theorem}
  Let $\alpha = (\alpha_1, \ldots, \alpha_d) \in (0, 1]^d$ and $A \in \C^{d \times d}$.
  Then, for each $x_0 \in \C^d$, the initial value problem
  \begin{equation}
    \label{eq:ivp}
    D_*^\alpha x(t) = A x(t), \qquad x(0) = x_0,
  \end{equation}
  has a uniquely determined solution in $C([0,\infty); \C^d)$. 
  The components of this solution can be expressed in the form 
  \begin{equation}
    \label{eq:series}
    x_j(t) 
    = 
    \sum_{k=0}^\infty \,\, \sum_{\ell_1, \ell_2, \ldots, \ell_{j-1}, \ell_{j+1}, \ldots, \ell_d = 1}^\infty 
              b_{k, \ell_1, \ell_2, \ldots, \ell_{j-1}, \ell_{j+1}, \ldots, \ell_d} t^{k \alpha_j + \sum_{\mu=1, \mu \ne j}^d \ell_\mu \alpha_\mu},
  \end{equation}
  and the series in eq.\ \eqref{eq:series} converges for all $t \ge 0$.
\end{theorem}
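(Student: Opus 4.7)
The plan is to extend the two-dimensional reasoning above to arbitrary dimension $d$. Existence and uniqueness of a solution in $C([0,\infty); \C^d)$ follow directly from Corollary \ref{cor:picard-lindeloef}, since $x \mapsto Ax$ is continuous and globally Lipschitz on $\C^d$. Hence the real task is to construct the series representation and verify its convergence on the whole half-line.

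For the structural part, I would make the generalised power-series ansatz
\[
  x_j(t) = \sum_{\mathbf{k}} b^{(j)}_{\mathbf{k}}\, t^{\mathbf{k}\cdot\alpha},\qquad \mathbf{k}\cdot\alpha = k_1\alpha_1 + \cdots + k_d\alpha_d,
\]
where the sum runs over multi-indices $\mathbf{k} = (k_1,\ldots,k_d) \in \{0,1,2,\ldots\}^d$, with $b^{(j)}_{\mathbf{0}} = x_j^0$. Assuming enough convergence to differentiate termwise, applying $D_*^{\alpha_j}$ shifts each exponent by $-\alpha_j$ and introduces the factor $\Gamma(\mathbf{k}\cdot\alpha + 1)/\Gamma(\mathbf{k}\cdot\alpha - \alpha_j + 1)$. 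Substituting into \eqref{eq:system2hom} and comparing coefficients of $t^{\mathbf{k}\cdot\alpha}$ then produces the recurrence
\[
  b^{(j)}_{\mathbf{k}+e_j} = \frac{\Gamma(\mathbf{k}\cdot\alpha + 1)}{\Gamma((\mathbf{k}+e_j)\cdot\alpha + 1)} \sum_{i=1}^d a_{ji}\, b^{(i)}_{\mathbf{k}},
\]
the direct analogue of \eqref{eq:recurrence2-c}--\eqref{eq:recurrence2-d}. This determines every $b^{(j)}_{\mathbf{k}}$ by induction on $|\mathbf{k}| = k_1 + \cdots + k_d$, and one verifies that $b^{(j)}_{\mathbf{k}}$ vanishes unless $\mathbf{k} = \mathbf{0}$ or $k_j \ge 1$, which matches the shape of \eqref{eq:series}.

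The heart of the argument is then to prove that the resulting series converges for every $t \ge 0$, generalising Lemma \ref{lemma:general-system-series-convergence}. Setting
\[
  \beta_{jk} \coloneqq \sum_{|\mathbf{k}| = k} |b^{(j)}_{\mathbf{k}}|,
\]
I would prove by induction that $\sum_{j=1}^d \beta_{jk} \le c_1 c_2^k/\Gamma(k\alpha^* + 1)$ for large $k$, with $\alpha^* = \min_\mu \alpha_\mu$, $\bar a = \max_{i,j}|a_{ij}|$ and suitable positive constants $c_1, c_2$. From the recurrence, $|b^{(j)}_{\mathbf{k}+e_j}| \le \bar a \cdot (\text{Gamma ratio}) \cdot \sum_i |b^{(i)}_{\mathbf{k}}|$, and by Stirling's asymptotic $\Gamma(z)/\Gamma(z+\gamma) \sim z^{-\gamma}$ this Gamma ratio is dominated, uniformly over multi-indices on the level $|\mathbf{k}|=k-1$ and over $j$, by $\Gamma((k-1)\alpha^* + 1)/\Gamma(k\alpha^* + 1)$ once $k$ is large enough; the extremum is attained when the mass of $\mathbf{k}$ is concentrated on a coordinate realising $\alpha^*$. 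Summing produces the one-step estimate $\sum_j \beta_{jk} \le d\bar a\,\Gamma((k-1)\alpha^* + 1)/\Gamma(k\alpha^* + 1) \cdot \sum_j \beta_{j,k-1}$, so induction yields the desired Mittag-Leffler-type bound. This bound majorises the series for $x_j(t)$ on every bounded $t$-interval exactly as in the two-dimensional case, giving convergence on $[0,\infty)$; termwise Caputo differentiation then becomes legitimate, the series satisfies \eqref{eq:ivp}, and the uniqueness part of Corollary \ref{cor:picard-lindeloef} identifies it with the unique continuous solution.

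The principal obstacle is the combinatorial bookkeeping in this last step: the single index $\mu$ of the two-dimensional argument is replaced by a multi-index ranging over a $(d-1)$-dimensional simplex of total mass $k-1$, and one has to identify the corner at which the Gamma ratio is largest in order to retain the correct exponent $k\alpha^*$ in the denominator. Once that extremal analysis is in place, the rest of the proof essentially follows the two-dimensional template line by line.
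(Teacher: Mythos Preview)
Your proposal is correct and follows exactly the route the paper intends: the paper carries out the argument in full only for $d=2$ and then states that ``the same ideas and methods can be applied if the dimension of the fractional differential equation system is greater than~2,'' so your multi-index recurrence, the level-sum quantities $\beta_{jk}$, the Stirling-based extremal bound on the Gamma ratio (with the factor $2$ becoming $d$), and the resulting Mittag-Leffler majorant are precisely the generalisation the authors leave to the reader.
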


\section{Asymptotic behavior of solutions of multi-order fractional differential equations}
\label{sec:asymptotics}

Having established these foundations, we can now come to the core of this paper, 
namely the discussion of the asymptotic behavior of solutions of linear multi-order fractional differential systems.
\subsection{Systems with (block) triangular coefficient matrices}
\label{subs:triangular}

Assume that $\alpha_k \in (0,1]$ for $1 \leq k \leq d$. 
In the case that the coefficient matrix $A$ of the system \eqref{eq:system} has a triangular structure, 
we provide a detailed investigation of the asymptotic behaviour of the system's solutions. 
More precisely, we obtain a necessary and sufficient condition such that all solutions of the homogeneous system
associated to \eqref{eq:system} tend to zero at infinity, and we derive sufficient conditions for all solutions of the
full inhomogeneous system \eqref{eq:system} to have this property.
In this context we stress that the $\alpha_k$ may be completely arbitrary numbers from the interval $(0,1]$; 
in particular it is allowed that $\alpha_k = \alpha_{k'}$ for some $k \ne k'$.

Thus, let us now first consider the system
\begin{subequations}
  \begin{equation}\label{upperEq}
    D_*^{\alpha_i} x_i(t) = \sum_{j=i}^{d} a_{ij} x_j(t), \qquad 1 \leq i \leq d,
  \end{equation}
  i.e.\ the case of a homogeneous system with an upper triangular matrix $A$, 
  together with the initial condition
  \begin{equation}
    x_i(0) = x_i^0, \qquad 1 \le i \le d.
  \end{equation}
\end{subequations}
In order to exclude the pathological and practically irrelevant case where
the right-hand sides of certain equations from the system \eqref{upperEq} do not depend on their respective unknown functions, 
we shall explicitly assume throughout this subsection that $a_{ii} \ne 0$ for all $i = 1,2,\ldots, d$.
In other words, we assume the matrix $A$ to be not only upper triangular but also nonsingular.

The case where $A$ is of lower triangular form can be handled in a completely analog manner;
we shall not treat this case explicitly.
The associated inhomogeneous system will be discussed later; cf.\ Corollary \ref{cor:inhomogeneous}.

In the simplest nontrivial case $d = 2$,
the system \eqref{upperEq} has the form
\begin{flalign*}
  D_*^{\alpha_1} x_1(t) 
   &=
   a_{11} x_1(t) + a_{12} x_2(t),
   \\
   D_*^{\alpha_2} x_2(t) 
   &=
   \phantom{a_{11} x_1(t) + {}} a_{22} x_2(t),
\end{flalign*}
and it is a relatively simple matter to explicitly compute its solution. Specifically, in view of the triangular structure of the coefficient matrix,
one can solve the second equation of the system directly and obtain the well known result \cite[Theorem~4.3]{Diethelm2010}
\begin{subequations}
  \label{eq:solutions-2d-triangular}
  \begin{equation}
    x_2(t) 
    = 
    x_2^0 E_{\alpha_2} (a_{22} t^{\alpha_2}).
  \end{equation}
  This result can be plugged into the system's first equation which then takes the form
  \begin{equation*}
     D_*^{\alpha_1} x_1(t) 
     =
     a_{11} x_1(t) + a_{12} x_2^0 E_{\alpha_2} (a_{22} t^{\alpha_2}).
  \end{equation*}
  For equations of this structure, the fractional version of the variation-of-constants method \cite[Theorem~7.2 and Remark~7.1]{Diethelm2010} provides the solution
  \begin{equation}
    x_1(t) 
    = 
    x_1^0 E_{\alpha_1} (a_{11} t^{\alpha_1}) 
      + a_{12} x_2^0 \int_0^t (t-s)^{\alpha_1 - 1} E_{\alpha_1, \alpha_1} \left( a_{11} (t-s)^{\alpha_1} \right) E_{\alpha_2} (a_{22} s^{\alpha_2}) \, ds.
  \end{equation}
\end{subequations}
From the representation (\ref{eq:solutions-2d-triangular}) it is evident 
that the solution vector $(x_1, x_2)^\top$ is an element of the function space $C[0,\infty)$. 
Moreover, the power series representations of the Mittag-Leffler functions imply that the component $x_2(t)$ can be written as a power series in $t^{\alpha_2}$, 
and therefore its asymptotic behavior as $t \to 0+$ is of the form 
\begin{equation*}
  x_2(t) 
  = 
  x_2^0 + \frac {c_2 a_{22}} {\Gamma(\alpha_2 + 1)} t^{\alpha_2} + O(t^{2 \alpha_2}),
\end{equation*}
whereas the behavior of $x_1(t)$ in this respect can be described by
\begin{equation*}
  x_1(t) 
  = 
  x_1^0 + \frac {c_1 a_{11}} {\Gamma(\alpha_1 + 1)} t^{\alpha_1} + o(t^{\alpha_1})
\end{equation*}
with some constant $c_1 \in \C$.
The arguments employed in Subsection \ref{subs:structure} can be used to derive more details.

These considerations can directly be generalized to homogeneous upper triangular systems of arbitrary dimension $d$. 
In this case we obtain the set of equations
\begin{equation}
  \label{eq:solution}
  x_i(t) 
  = 
  x_i^0 E_{\alpha_i} (a_{ii} t^{\alpha_i}) 
      + \sum_{j=i+1}^d a_{ij} \int_0^t (t-s)^{\alpha_i - 1} E_{\alpha_i, \alpha_i} \left( a_{ii} (t-s)^{\alpha_i} \right) x_j(s) \, ds
\end{equation}
for $i = d, d-1, \ldots, 1$ which can be recursively evaluated to explicitly compute the solutions.

Some known results about the asymptotic behavior of the Mittag-Leffler functions admit to draw the conclusions required in the asymptotic behavior analysis. 
The main result in this context is the following theorem. 
The proof of its statements requires a number of auxiliary results that can be considered as minor extensions of already known theorems and lemmas.
Since these extensions may be of a certain degree of independent interest, we have explicitly formulated and collected them, together with
complete proofs, in Appendix \ref{sec:appendix}.

\begin{theorem}\label{thm:characterSpec}
  \begin{itemize}
  \item[(i)] Every solution of the system \eqref{upperEq} converges to zero at infinity if and only if 
    \begin{equation}\label{eq:characterSpec}
      |\arg{(a_{kk})}| > \frac {\alpha_k \pi} {2} \qquad \forall k \in \{ 1, \dots, d \}.
    \end{equation}
  \item[(ii)] If there exists $k \in \{ 1, \dots, d \}$ such that
    $|\arg{(a_{kk})}| < \alpha_k \pi / 2$ then there exists some $x_0$ such
    that the solution to the system \eqref{upperEq} that satisfies the initial
    condition $x(0) = x_0$ is unbounded.
  \end{itemize}
\end{theorem}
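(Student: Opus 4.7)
The plan is to exploit the explicit recursive representation \eqref{eq:solution} of the solution and argue by reverse induction on the component index $i$, invoking the standard asymptotic behavior of the Mittag-Leffler functions (which the paper collects in its appendix).

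For the sufficiency in part (i), I would induct from $i = d$ down to $i = 1$. The base case is the scalar formula $x_d(t) = x_d^0 E_{\alpha_d}(a_{dd} t^{\alpha_d})$, and the assumption $|\arg a_{dd}| > \alpha_d \pi / 2$ yields $x_d(t) \to 0$ via the algebraic tail $E_{\alpha_d}(z) = -1/(\Gamma(1-\alpha_d) z) + O(|z|^{-2})$ valid in the relevant sector. For the inductive step, assuming that $x_{i+1}, \ldots, x_d$ all decay to zero, I inspect \eqref{eq:solution}: the free term $x_i^0 E_{\alpha_i}(a_{ii} t^{\alpha_i})$ decays by the same asymptotics, and each convolution term $a_{ij} \int_0^t (t-s)^{\alpha_i - 1} E_{\alpha_i,\alpha_i}(a_{ii}(t-s)^{\alpha_i}) x_j(s)\,ds$ is handled through an appendix lemma asserting that, whenever $|\arg a| > \alpha \pi /2$ and $y \in C[0,\infty)$ with $y(s) \to 0$, the convolution $\int_0^t (t-s)^{\alpha - 1} E_{\alpha, \alpha}(a(t-s)^{\alpha}) y(s)\,ds$ likewise tends to zero. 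This rests on the observation that the angular condition makes the kernel $u \mapsto u^{\alpha - 1} E_{\alpha,\alpha}(a u^{\alpha})$ absolutely integrable on $[0,\infty)$, since $E_{\alpha,\alpha}(au^{\alpha})$ decays like $u^{-\alpha}$ in that sector.

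For part (ii), I would pick an initial datum $x_0$ with $x_k^0 \neq 0$ and $x_j^0 = 0$ for every $j \neq k$. The upper triangular structure ensures that the equations for indices $j > k$ form a closed, homogeneous subsystem with zero initial condition, so uniqueness (Corollary \ref{cor:picard-lindeloef}) forces $x_j \equiv 0$ for every $j > k$. Substituting this back into the $k$-th equation yields $D_*^{\alpha_k} x_k(t) = a_{kk} x_k(t)$ with $x_k(0) = x_k^0$, hence $x_k(t) = x_k^0 E_{\alpha_k}(a_{kk} t^{\alpha_k})$. Under $|\arg a_{kk}| < \alpha_k \pi/2$ the exponential part of the Mittag-Leffler asymptotics implies that this component, and therefore the whole solution vector, is unbounded.

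The necessity in (i) is obtained by extending the same construction to the borderline case: if \eqref{eq:characterSpec} fails at some index $k$, the same choice of initial data produces a solution whose $k$-th component equals $x_k^0 E_{\alpha_k}(a_{kk} t^{\alpha_k})$, which either blows up (when $|\arg a_{kk}| < \alpha_k \pi / 2$) or stays bounded away from zero along a sequence $t_n \to \infty$ (when $|\arg a_{kk}| = \alpha_k \pi / 2$, by the oscillatory asymptotics on the Stokes line); in either case the solution fails to decay to zero. The principal technical obstacle I expect is the convolution lemma used in the inductive step of (i), since decay of the input is not automatically transferred through a weakly singular Volterra operator: the argument must split the integral into a piece near $s = t$ (where the kernel is singular but $y(s)$ is arbitrary) and a piece far from $s = t$ (where $y(s)$ is small and the kernel has its $u^{-\alpha-1}$ tail), exploiting the integrability of the kernel granted by $|\arg a_{ii}| > \alpha_i \pi / 2$.
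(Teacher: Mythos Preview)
Your proposal is correct; the sufficiency argument in (i) is essentially identical to the paper's (backward induction plus the convolution lemma you describe, which is precisely Lemma~\ref{stableLemma}(i) in the appendix).

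For the necessity direction of (i) and for (ii), however, your route differs from the paper's. You set $x_j^0 = 0$ for all $j \neq k$ and invoke uniqueness for the closed subsystem in indices $j > k$ to force those components to vanish identically, reducing the $k$-th equation to the pure scalar case $x_k(t) = x_k^0 E_{\alpha_k}(a_{kk} t^{\alpha_k})$. The paper instead lets the initial values for $j > k_0$ be arbitrary, chooses $k_0$ as the \emph{largest} index where the condition fails (so the components with $j > k_0$ still decay by the sufficiency part), writes $\varphi_{k_0}(t, x_0) = x_{k_0}^0 E_{\alpha_{k_0}}(a_{k_0,k_0} t^{\alpha_{k_0}}) + h(t)$ via variation of constants, and then argues that since the Mittag-Leffler factor does not decay, varying $x_{k_0}^0$ produces at least one non-decaying solution. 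Your construction is more direct and bypasses the need to control $h(t)$; the paper's approach avoids singling out a special initial datum and instead shows that non-decay is generic once the condition is violated. Both arguments rely on the same Mittag-Leffler asymptotics on and inside the critical sector, and both are complete.
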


\begin{proof}
  For the proof of part (i),
  we will first show that the condition \eqref{eq:characterSpec} is sufficient to assert that every solution of \eqref{upperEq} converges to zero at infinity.
  Indeed, for any initial value $x_0 = (x_1^0, \dots, x^0_d)^{\top}\in \C^d$, 
  we denote the solution of \eqref{upperEq} starting from $x_0$ by $\varphi (\cdot, x_0) = (\varphi_1(\cdot, x_0), \ldots, \varphi_d(\cdot, x_0))^\top$.
  Our proof will use mathematical induction over the index $j$ of the components of the solution vector in a backward direction. 
  Thus, for our induction basis we consider $j = d$.
  Since the $d$-th equation of the system \eqref{upperEq} reads
  \[
    D_*^{\alpha_d} x_d(t) = a_{dd} x_d(t),
  \]
  it follows from Lemma \ref{stableLemma}(i) that
  the condition $|\arg a_{dd}| > \alpha_d \pi / 2$ is sufficient to assert that $\varphi_d (t, x_0) \to 0$ as $t \to \infty$ for all $x_0$.
  For the induction step, we assume that we have already shown that the components $d$, $d-1$, \ldots, $j+1$ of the solution tend to $0$ as $t \to \infty$
  for any choice of the initial values. Then we need to prove that this is also true for the $j$-th component. 
  To this end we recall that the $j$-th component of the differential equation system \eqref{upperEq} reads
  \begin{equation*}
    D_*^{\alpha_j} x_j(t) = a_{jj} x_j(t) + \sum_{k=j+1}^d a_{jk} \varphi_k(t, x_0).
  \end{equation*}
  All the terms in the sum are already known and, because of the induction hypothesis, they are continuous and tend to zero as $t \to \infty$.
  Thus we may apply Lemma \ref{stableLemma}(i) and immediately deduce that $x_j$ has this property as well.
  
  To conclude the proof of part (i) we now have to demonstrate that \eqref{eq:characterSpec} is also necessary for all solutions of \eqref{upperEq} tend to zero as $t\to\infty$. To this end we assume that \eqref{eq:characterSpec} does not hold.
  Then there exists an index $k_0 \in \{ 1, 2, \ldots, d \}$ which satisfies 
  \begin{equation*}
    |\arg{(a_{ii})}| > \frac{\alpha_i \pi}{2} \mbox{ for }  k_0 + 1 \leq i \leq d 
    \quad \mbox{ and } \quad
    |\arg{(a_{k_0,k_0})}| \leq \frac{\alpha_{k_0} \pi}{2},
  \end{equation*}
  i.e.\ $k_0$ is the largest index for which \eqref{eq:characterSpec} is violated.
  Consider the equation
  \begin{equation}\label{tam}
    D^{\alpha_{k_0}}_* x_{k_0}(t) = a_{k_0k_0} x_{k_0}(t) + f(t) 
    \qquad \mbox{ with } \qquad
    f(t) \coloneqq \sum_{i = k_0 + 1}^d a_{k_0i} x_i(t).
  \end{equation}
  Since \eqref{eq:characterSpec} is true for all $i > k_0$, the arguments used
  above imply that $f$ is continuous and tends to zero at infinity. 
  As in the considerations above, we may use the fractional variation-of-constants method \cite[Theorem~7.2 and Remark~7.1]{Diethelm2010}
  to see that the set of all solutions to \eqref{tam} consists of the functions 
  \begin{equation}
    \label{eq:solution-tam}
    \varphi_{k_0}(t, x_0) =  x_{k_0}^0 E_{\alpha_{k_0}} (a_{k_0,k_0} t^{\alpha_{k_0}}) + h(t)
  \end{equation}
  where $x_{k_0}^0$ runs through the entire complex plane and where
  \[
    h(t) 
    \coloneqq \int_0^t (t - \tau)^{\alpha_{k_0} - 1} E_{\alpha_{k_0}, \alpha_{k_0} } (a_{k_0,k_0} (t-\tau)^{\alpha_{k_0}}) f(\tau) \; d\tau.
  \]
  The well known asymptotic behavior of the Mittag-Leffler functions \cite[Proposition~3.6 and Theorem~4.3]{GorenfloEtal2014} then implies
  that $E_{\alpha_{k_0}} (a_{k_0,k_0} t^{\alpha_{k_0}})$ does not converge to $0$ as $t \to \infty$
  because of our assumption on the relation of $\alpha_{k_0}$ and $| \arg a_{k_0, k_0}|$.
  Now assume that there exists some $x_{k_0}^0 \in \C$ such that $\varphi_{k_0}(t, x_0) \to 0$ as $t \to \infty$. 
  Then, it follows that for every $\tilde x_0 \in C^d$ with $x_k^0 = \tilde x_k^0$ for $k = k_0+1, \ldots, d$ and
  $x_{k_0}^0 \neq \tilde x_{k_0}^0$, we have
  \begin{align*}
    \varphi_{k_0}(t, \tilde x_0) 
    & = \tilde x_{k_0}^0 E_{\alpha_{k_0}} (a_{k_0,k_0} t^{\alpha_{k_0}}) + h(t) \\
    & = (\tilde x_{k_0}^0 - x_{k_0}^0) E_{\alpha_{k_0}} (a_{k_0,k_0} t^{\alpha_{k_0}}) + \varphi_{k_0}(t, x_0).
  \end{align*}
  For $t \to \infty$, the last summand on the right-hand side of this equality tends to zero but the other summand does not,
  and hence we conclude that $\varphi(t, \tilde x_0)$ does not tend to zero as $t \to \infty$ which yields our required contradiction.

  For the proof of part (ii), we --- much as above --- know that
  there exists an index $k_0 \in \{ 1, 2, \ldots, d \}$ which satisfies 
  \begin{equation*}
    |\arg{(a_{ii})}| \ge \frac{\alpha_i \pi}{2} \mbox{ for }  k_0 + 1 \leq i \leq d 
    \quad \mbox{ and } \quad
    |\arg{(a_{k_0k_0})}| < \frac{\alpha_{k_0} \pi}{2}.
  \end{equation*}
  We may then proceed in the same way as in the second part of the proof of (i).
  However, now we know that $|E_{\alpha_{k_0}} (a_{k_0k_0} t^{\alpha_{k_0}})| \to \infty$ as $t \to \infty$,
  and therefore we may even conclude that there exists some $x_{k_0}^0 \in \C$ such that $\varphi_{k_0}(t, x_0)$ is unbounded.
\end{proof}

\begin{remark}
  \label{rmk:block-triangular}
  The same arguments can be used if the coefficient matrix $A$ of the system has a block-upper triangular structure and the 
  differentiation matrix on the left-hand side of the differential equation has a block structure with identical block sizes
  where each block consists of differential operators of the same order, i.e.\ if the differential equation has the form
  \begin{equation}\label{eq:system3}
    \begin{pmatrix}
      D_1
      \\
      & \ddots
      \\
      & & D_n
    \end{pmatrix}
    x(t)
    =
    \begin{pmatrix} 
      A_{11} & A_{12} & \cdots & A_{1n} 
      \\
             & A_{22} &        & A_{2n}
      \\
             &        & \ddots & \vdots
      \\
             &        &        & A_{nn}
    \end{pmatrix}
    x(t)
  \end{equation}
  where, using the notation $I_\mu$ for the $\mu$-dimensional unit matrix,
  \begin{equation*}
     D_j = D_*^{\alpha_j} I_{d_j},
  \end{equation*}
  $A_{jk} \in \C^{d_j \times d_k}$ and $x = (x_1, \ldots, x_d)^\top$ with $d = \sum_{j=1}^n d_j$: 
  In this case, 
  \begin{itemize}
  \item all solutions of the system \eqref{eq:system3} converge to zero as $t \to \infty$ if and only if, for all $j=1,2,\ldots,n$, all 
    eigenvalues $\lambda_{jk}$, $k=1,2,\ldots, d_j$, of the matrix $A_{jj}$ satisfy
    $|\arg \lambda_{jk}| > \alpha_j \pi /2$, and 
  \item whenever there
    exist some $j \in \{ 1, 2, \ldots, n \}$ and $k \in \{ 1, 2, \ldots, d_j \}$ 
    with $|\arg \lambda_{jk}| < \alpha_j \pi /2$, there exists an initial value whose
    corresponding solution is unbounded.
  \end{itemize}
\end{remark}

A close inspection of the proof of Theorem \ref{thm:characterSpec} reveals 
that the statement of its part (i) can easily be extended to cover a class of inhomogeneous problems:

\begin{corollary}
  \label{cor:inhomogeneous}
  Consider the differential equation system
  \begin{equation}\label{inhomogeneous}
    D_*^{\alpha_i} x_i(t) = \sum_{j=i}^{d} a_{ij} x_j(t) + g_i(t), \qquad 1 \leq i \leq d,
  \end{equation}
  where, for all $i = 1,2, \ldots, d$, the functions $g_i : [0, \infty) \rightarrow \C$ are continuous 
  and satisfy
  \[ 
    \lim_{t \to \infty} g_i(t) = 0.
  \]
Every solution of the inhomogeneous system \eqref{inhomogeneous} converges to zero at infinity if and only if all solutions of the associated homogeneous system \eqref{upperEq} tend to zero as $t\to\infty$, i.e.\ if and only if condition \eqref{eq:characterSpec} is satisfied.
\end{corollary}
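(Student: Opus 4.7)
The plan is to mimic, almost verbatim, the proof of Theorem \ref{thm:characterSpec}(i), treating the continuous inhomogeneity $g_i$ as an additional forcing term that tends to zero and can therefore be absorbed into the forcing produced by the higher-indexed components. The key observation is that Lemma \ref{stableLemma}(i) already accommodates a scalar equation of the form $D_*^{\alpha} y(t) = a\, y(t) + \psi(t)$ with $\psi$ continuous and $\psi(t) \to 0$; every step in the proof of Theorem \ref{thm:characterSpec}(i) used only this property of the forcing.

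For sufficiency, I would run a backward induction over $j = d, d-1, \dots, 1$. For the base case $j = d$ the equation reads $D_*^{\alpha_d} x_d(t) = a_{dd} x_d(t) + g_d(t)$, and since $g_d$ is continuous with $g_d(t)\to 0$, Lemma \ref{stableLemma}(i) together with \eqref{eq:characterSpec} at $k=d$ yields $x_d(t) \to 0$. For the inductive step, assuming $x_{j+1}(t), \dots, x_d(t)$ already tend to zero, I would rewrite the $j$-th equation as
\[
  D_*^{\alpha_j} x_j(t) = a_{jj} x_j(t) + \tilde g_j(t), \qquad \tilde g_j(t) \coloneqq g_j(t) + \sum_{k=j+1}^d a_{jk} x_k(t),
\]
and observe that $\tilde g_j$ is continuous with $\tilde g_j(t) \to 0$, so Lemma \ref{stableLemma}(i) and \eqref{eq:characterSpec} at $k=j$ finish the step.

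For necessity I would argue by contradiction, reusing verbatim the second half of the proof of Theorem \ref{thm:characterSpec}(i). Suppose \eqref{eq:characterSpec} fails; let $k_0$ be the largest index for which $|\arg a_{k_0 k_0}| \le \alpha_{k_0}\pi/2$. Since $k_0$ is largest, the already-established sufficiency direction applied to the upper-triangular subsystem indexed by $k_0+1, \dots, d$ shows that $x_i(t) \to 0$ for every such $i$ and every choice of $(x_{k_0+1}^0, \dots, x_d^0)$. Fixing those initial values, the function $f(t) \coloneqq g_{k_0}(t) + \sum_{i>k_0} a_{k_0 i} x_i(t)$ is continuous and tends to $0$, and does not depend on $x_{k_0}^0$. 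The fractional variation-of-constants formula then produces a representation of the $k_0$-th component exactly of the form \eqref{eq:solution-tam}, with a Mittag-Leffler term $x_{k_0}^0 E_{\alpha_{k_0}}(a_{k_0 k_0} t^{\alpha_{k_0}})$ plus a fixed remainder $h(t)$. Comparing the solutions obtained from two distinct choices $x_{k_0}^0 \ne \tilde x_{k_0}^0$ of the $k_0$-th initial value, the difference equals $(x_{k_0}^0 - \tilde x_{k_0}^0) E_{\alpha_{k_0}}(a_{k_0 k_0} t^{\alpha_{k_0}})$, which by \cite[Proposition~3.6 and Theorem~4.3]{GorenfloEtal2014} does not converge to $0$; this contradicts the assumption that every solution of \eqref{inhomogeneous} converges to zero.

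I do not anticipate a genuine technical obstacle: the entire argument goes through by regarding $g_i$ as an extra continuous term that vanishes at infinity, which is precisely the hypothesis under which Lemma \ref{stableLemma}(i) was stated. The only point to be careful about is the bookkeeping in the necessity argument, namely that $f$ must be made independent of the varied initial value $x_{k_0}^0$; this is automatic from the upper-triangular structure, since the components $x_{k_0+1}, \dots, x_d$ are determined by $(x_{k_0+1}^0, \dots, x_d^0)$ alone.
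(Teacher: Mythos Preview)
Your sufficiency argument is correct and matches the paper's exactly: both run backward induction, absorbing $g_j$ into the forcing term and invoking Lemma \ref{stableLemma}(i) at each step.

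Your necessity argument is also correct, but the paper takes a shorter route. Rather than reproducing the Mittag-Leffler comparison from the proof of Theorem \ref{thm:characterSpec}(i), the paper observes that if $\varphi(\cdot,x_0)$ and $\varphi(\cdot,0)$ are two solutions of the inhomogeneous system \eqref{inhomogeneous}, then their difference $\varphi(\cdot,x_0)-\varphi(\cdot,0)$ solves the homogeneous system \eqref{upperEq} with initial value $x_0$. Hence, if every inhomogeneous solution tends to zero, so does every homogeneous solution, and one simply cites Theorem \ref{thm:characterSpec}(i) to obtain \eqref{eq:characterSpec}. This linearity trick avoids reopening the argument at the $k_0$-th component, the variation-of-constants representation, and the asymptotics of $E_{\alpha_{k_0}}$; it also makes transparent why the two formulations in the corollary's statement (``all homogeneous solutions vanish'' versus ``\eqref{eq:characterSpec} holds'') are interchangeable. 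Your approach, on the other hand, has the minor advantage of being self-contained: it does not rely on having already packaged the homogeneous result as a black box.
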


\begin{proof}
  Assume first that every solution of \eqref{inhomogeneous} tends to zero as $t \to \infty$. 
  In order to prove that every solution of the corresponding homogeneous system \eqref{upperEq} converges to zero, we choose an arbitrary $x_0 \in \C^d$.
  It is then sufficient to show that the solution of \eqref{upperEq} that starts at $x_0$ converges to zero as $t \to \infty$. 
  To this end, we take the solutions $\varphi(\cdot,x_0)$ and $\varphi(\cdot,0)$ of \eqref{inhomogeneous} that start at $x_0$ and at $0$, respectively.
  By assumption, both these functions tend to $0$ as $t \to \infty$.
  Thus, $\varphi(t, x_0) - \varphi(t, 0)$ tends to $0$ as $t \to \infty$ as well.
  But clearly, this difference is identical to the solution of the homogeneous system \eqref{upperEq} that starts at $x_0$.

  Regarding the proof of the other direction of the equivalence, we assume that the condition \eqref{eq:characterSpec} is satisfied. Under this hypothesis, we may proceed as in the first part of the proof of Theorem \ref{thm:characterSpec}(i).
  Using the argumentation via Lemma \ref{stableLemma}(i) employed in the induction step there, 
  we can derive that $\varphi_d(t, x_0) \to 0$ as $t \to \infty$ for any $x_0 \in \C^d$.
  Then we can proceed inductively as in the first part of the proof of Theorem \ref{thm:characterSpec}(i)
  and demonstrate that the other components of $\varphi(\cdot, x_0)$ vanish near infinity as well. The proof is complete.
\end{proof}

\begin{remark}
  Clearly, the same arguments can be used to extend the statement of Remark \ref{rmk:block-triangular} regarding block triangular systems 
  to the inhomogeneous case as well.
\end{remark}

\subsection{Systems with general coefficient matrices}
\label{subs:general-orders}

With respect to the stability theory for such systems of equations with general (not necessarily triangular or block triangular) coefficient matrices, 
we are not yet in a position to provide a comprehensive theory. 
We can, however, develop an approach that works under certain restrictions on the orders of the differential 
operators involved. 
Specifically we shall assume that $\alpha_j \in (0,1]$ for all $j$
and that there exists some $\alpha^* \in (0,1]$ and some $\rho_j \in \Q$ such that $\alpha_j = \rho_j \alpha^*$.

In this case, there exist positive integers $p_j$ and $q_j$ ($j=1,2,\ldots,d$) such that, for all $j$, 
$\gcd (p_j, q_j) = 1$ and $\rho_j = p_j / q_j$. Then we define $q$ to be the least common multiple of the $q_j$.
This allows us to deduce that for every $j$ there exists some positive integer $r_j$ 
such that $\alpha_j = \alpha^* r_j / q$ (clearly, $r_j = p_j q / q_j$).
According to \cite[Theorem~8.1]{Diethelm2010}, we can then rewrite the $j$-th equation of the original system \eqref{eq:system} 
as an equivalent system of $r_j$ differential equations of order $\alpha^* / q$. 
Thus, the entire system \eqref{eq:system} can be expressed as a system of $d^* = \sum_{j=1}^d r_j$ equations of order $\alpha^* / q$.
This new system has the form
\begin{subequations}
  \begin{equation}\label{eq:commensurate-matrix}
    D_*^{\alpha^* / q} x^*(t) = A^* x^*(t) + g^*(t)
  \end{equation}
  where the matrix $A^*$ has the block structure
  \begin{equation}
    \label{eq:commensurate-matrix-a}
    A^* = 
    \begin{pmatrix*}
      A_{11} & A_{12} & \cdots & A_{1d} 
      \\
      A_{21} & A_{22} & \cdots & A_{2d} 
      \\
      \vdots &       & \ddots & \vdots 
      \\
      A_{d1} & A_{d2} & \cdots & A_{dd}      
    \end{pmatrix*}
  \end{equation}
  with matrices $A_{jk} \in \C^{r_j \times r_k}$ given by
  \begin{equation}
    \label{eq:commensurate-matrix-b}
    A_{jj} = 
    \begin{pmatrix*}
      0 & 1 & 0 & \cdots & 0 \\
      0 & 0 & 1 & \ddots & \vdots \\
      \vdots & & \ddots & \ddots & 0 \\
      0 & 0& \cdots & 0 & 1 \\
      a_{jj} & 0 & \cdots & 0 & 0
    \end{pmatrix*}
    \quad \mbox{ for } j = 1,2, \ldots, d
  \end{equation}
  and
  \begin{equation}
    \label{eq:commensurate-matrix-c}
    A_{jk} = 
    \begin{pmatrix*}
      0 & 0 &  \cdots & 0 \\
      \vdots & \vdots & \ddots & 0 \\
      0 & 0 & \cdots & 0 \\
      a_{jk} & 0 & \cdots & 0 
    \end{pmatrix*}
    \quad \mbox{ for } j, k = 1,2, \ldots, d
    \mbox{ and } j \neq k
  \end{equation}
  and with the vector $g^*$ being defined by
  \begin{equation}
    g^*(t) = (\underbrace{0, \ldots, 0}_{r_1 - 1\text{ times}}, g_1(t),
              \underbrace{0, \ldots, 0}_{r_2 - 1\text{ times}}, g_2(t),
              \ldots 
              \underbrace{0, \ldots, 0}_{r_d - 1\text{ times}}, g_d(t))^\top . 
  \end{equation}
\end{subequations}
While the dimension $d^*$ of this new system is potentially very much larger than the dimension $d$ of the original system, 
thus substantially increasing the complexity, 
we obtain a significant advantage because all equations of the system now have the same order, 
so that we may invoke the well known classical theory to investigate the asymptotic behavior of solutions of the system. 
Specifically, in view of this construction, we can immediately deduce from \cite[Theorem~8.1]{Diethelm2010}:

\begin{theorem}
  \label{thm:system-commensurate}
  Let the function $g: [0, \infty) \to \C^d$ be continuous and satisfy $g(t) \to 0$ for $t \to \infty$.
  Moreover, assume that $\alpha_j \in (0,1]$ for all $j$
  and that there exist some $\alpha^* \in (0,1]$ and some $\rho_j \in \Q$ such that $\alpha_j = \rho_j \alpha^*$ for all $j$.
  Then, all solutions of the original differential equation system \eqref{eq:system} converge to zero at infinity
  if the eigenvalues $\lambda_j^*$ of the associated system's coefficient matrix $A^*$
  defined in eqs.\ \eqref{eq:commensurate-matrix-a}, \eqref{eq:commensurate-matrix-b} and \eqref{eq:commensurate-matrix-c}
  satisfy $| \arg \lambda_j^* | > \pi \alpha^* / (2q)$ for all $j$, where $q$ is the least common multiple of the
  denominators of the $\rho_j$.
\end{theorem}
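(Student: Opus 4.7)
The plan is to exploit the reduction, carried out in the paragraph preceding the theorem, that converts the original multi-order system \eqref{eq:system} into the commensurate-order system \eqref{eq:commensurate-matrix} of fractional order $\alpha^*/q$ and dimension $d^* = \sum_{j=1}^d r_j$. Once this reduction is in place, the theorem should follow directly from the classical asymptotic/stability criterion for commensurate linear systems recorded in \cite[Theorem 8.1]{Diethelm2010}, essentially the Matignon spectral condition applied to the matrix $A^*$ at order $\alpha^*/q$.

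First I would verify that the embedding $x \mapsto x^*$ is such that the first entry of the $j$-th block of $x^*$ coincides with $x_j$. This is how the companion-matrix construction in \eqref{eq:commensurate-matrix-b} and \eqref{eq:commensurate-matrix-c} is designed: the auxiliary entries of the $j$-th block encode $D_*^{m\alpha^*/q} x_j$ for $m=0,1,\ldots,r_j-1$, and the last row of $A_{jj}$ together with the nonzero entries of the $A_{jk}$ ($k\ne j$) recovers the original equation $D_*^{\alpha_j} x_j = \sum_k a_{jk} x_k + g_j$. In particular, convergence of $x^*(t)$ to zero as $t \to \infty$ immediately yields convergence of each $x_j(t)$ to zero.

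Second, I would observe that $g^*$ is continuous and $g^*(t)\to 0$ as $t\to\infty$ since each of its components is either identically zero or equal to one of the $g_j$, and $g(t)\to 0$ by hypothesis. Then I would apply the classical result for commensurate-order linear inhomogeneous systems: writing $A^*$ in Jordan normal form, the solution of \eqref{eq:commensurate-matrix} can be represented as a linear combination of expressions of the form $t^m E_{\alpha^*/q,\beta}(\lambda_j^* t^{\alpha^*/q})$ plus a convolution of such kernels with $g^*$. The spectral hypothesis $|\arg\lambda_j^*| > \pi\alpha^*/(2q)$ places every eigenvalue strictly outside the sector of order $\alpha^*/q$, so the standard decay estimates for the Mittag-Leffler functions (the analogue of Lemma \ref{stableLemma}(i) used in Section \ref{subs:triangular}) yield $x^*(t) \to 0$ as $t \to \infty$; the convolution term vanishes as well because it is a convolution of an $L^1$ kernel (again by the decay estimates) with a continuous function tending to zero.

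The main obstacle, I expect, is purely administrative rather than mathematical: one must bookkeep the block indexing carefully to confirm that the Mittag-Leffler estimates are applied at the correct order, namely $\alpha^*/q$, so that the sector condition reads $|\arg \lambda_j^*| > \pi(\alpha^*/q)/2 = \pi\alpha^*/(2q)$ as in the statement. No fundamentally new ingredient beyond \cite[Theorem 8.1]{Diethelm2010} and the reduction argument displayed in \eqref{eq:commensurate-matrix}--\eqref{eq:commensurate-matrix-c} is required, which is why the theorem is presented as an immediate consequence of the preceding construction.
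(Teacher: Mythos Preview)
Your proposal is correct and follows essentially the same route as the paper: reduce to the commensurate system \eqref{eq:commensurate-matrix} via \cite[Theorem~8.1]{Diethelm2010}, pass to Jordan normal form of $A^*$, and then exploit the resulting (block) triangular structure together with the decay of $g^*$ to conclude that $x^*(t)\to 0$, hence $x(t)\to 0$. The paper phrases the last step as an appeal to the arguments of Theorem~\ref{thm:characterSpec} and Corollary~\ref{cor:inhomogeneous}, while you describe it via the explicit Mittag-Leffler representation and convolution estimates; these are the same computation, and the only quibble is that \cite[Theorem~8.1]{Diethelm2010} is the equivalence-of-systems result rather than the stability criterion itself.
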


\begin{proof}
  From \cite[Theorem~8.1]{Diethelm2010}, we see that the systems
  \eqref{eq:system} and \eqref{eq:commensurate-matrix} are equivalent. Hence,
  we only concentrate on the system \eqref{eq:commensurate-matrix}. By changing
  variable $x^*=Ty$, where $T$ is the non-singular matrix which transforms $A^*$
  into a Jordan normal form $B$, the system \eqref{eq:commensurate-matrix}
  becomes 
  \begin{equation}\label{eq_equivalent}
    D_*^{\alpha^* / q} y(t) = B y(t) + \hat{g}(t),
  \end{equation}
  where $B = T^{-1} A T = \mathop{\text{diag}} (B_1, \dots, B_j, \dots, B_s)$ 
  where $B_j$ is the
  Jordan block corresponding the eigenvalue $\lambda^*_j$ of the matrix $A^*$
  and $\hat{g} = T^{-1} g^*$. Note that $\lim_{t \to \infty} \hat{g}(t) = 0$. Now,
  using the same arguments as in the proof of Theorem \ref{thm:characterSpec}
  and Corollary \ref{cor:inhomogeneous}, we see that every solution of the system
  \eqref{eq:commensurate-matrix} tends to zero if and only if the
  eigenvalues $\lambda^*_j$ of the associated system's coefficient matrix
  $A^*$ satisfy $| \arg \lambda^*_j | > \pi \alpha^* / (2q)$ for all $j$. The
  proof is complete. 
\end{proof}

Unfortunately, this criterion is based on the new system's coefficient matrix $A^*$, and thus it only indirectly makes use of the
coefficients of the original matrix $A$. It would be useful to have a formulation that allows to directly draw such a conclusion
from the original matrix without having to explicitly form the much larger new matrix and to compute its eigenvalues.
However, the following example indicates that we can probably not expect to find a simple criterion that permits to immediately decide the
question for the solution asymptotics for a given differential equation system.

\begin{example}
  \label{ex:1}
  Consider the system
  \begin{equation}\label{cex2}
    \begin{pmatrix}
      D_{*}^{1/2} x_1(t)
      \\
      D_{*}^{1/4} x_2(t)
    \end{pmatrix}
    =
    A x(t)
    \quad \mbox{ where }
    A 
    = 
    \begin{pmatrix}
      a_{11} & a_{12} \\
      a_{21} & a_{22}
    \end{pmatrix}
    = \begin{pmatrix*}[l]
        \phantom{-}0.00001 &  1 \\
                  -0.0022  &  0.1
    \end{pmatrix*}.
  \end{equation}
  Following the development above, we may choose $\alpha^* = 1$ and $q = 4$ in this example, 
  and thus this two-dimensional system can be rewritten as a three-dimensional system of order $\alpha^*/q = 1/4$ in the form
  \begin{equation}
    D^{1/4} x^*(t) 
    =
    A^* x^*(t)
    \quad \mbox{ with } 
    A^* 
    =
    \begin{pmatrix*}[l]
      \phantom{-}0       & 1  &  0 \\         
      \phantom{-}0.00001 & 0  &  1 \\
                -0.0022  &  0 &  0.1
    \end{pmatrix*}.
  \end{equation}
  The components $x_1^*$ and $x_3^*$ of the solution to this new system are then identical to the two components 
  $x_1$ and $x_2$, respectively, of the original system's solution.
  The eigenvalues of $A^*$ are $\lambda_1^* = -0.103917$ and  $\lambda_{2/3}^* = 0.101958 \pm 0.10385 \mathrm{i}$
  so that $\arg \lambda_1^* = \pi$ and $|\arg \lambda_2^*| = |\arg \lambda_3^*| = 0.79459 > \pi/8 = \pi \alpha^* /(2q)$.
  Therefore, Theorem \ref{thm:system-commensurate} asserts that all solutions of the system given in eq.\ \eqref{cex2} tend to zero at infinity.

  However, this observation does not appear to be immediately deducible from the original matrix $A$. By a simple calculation, we see that the eigenvalues of this matrix are $\lambda_1 = 0.0673111$ and $\lambda_2 = 0.0326989$ and thus
  $\arg \lambda_1 = \arg \lambda_2 = 0$ --- a property that one would
  normally associate with a system for which, in particular,
  unbounded solutions must be expected.

  Similarly, the diagonal entries of $A$ are real and positive as well, so their arguments are zero too.
  Thus, an argumentation based on the diagonal entries and not the eigenvalues 
  like the one that we had shown to be valid for triangular systems in Subsection \ref{subs:triangular}
  is not directly applicable to the case of a general (non-triangular) coefficient matrix either.
\end{example}

This seemingly negative observation is not the final word though.
Using different techniques we may actually derive a strategy that allows to investigate the stability question in a satisfactory manner
at least for the case of a homogeneous system.
Specifically, from the proof of Theorem \ref{exist_Theorem} 
we see that all solutions of the homogeneous multi-order system \eqref{eq:system2hom} are exponentially bounded. 
(This essentially follows from the generalized power series representation of the solution components and the estimate \eqref{eq:bound-beta} for
the coefficients of these series.)
Hence, we may take the Laplace transform on both sides of this system. This leads to
\begin{equation}
  \label{tem_1:sys}
  s^{\alpha_i} X_i(s) - s^{\alpha_i-1} x_i(0) = \sum_{j=1}^d a_{ij} X_j(s), \quad i = 1,\ldots,d,
\end{equation}
where
$X_i(s)$ is the Laplace transform of the $i$-th component $x_i(t)$ of the solution $x(t)$. 
The system \eqref{tem_1:sys} can be rewritten in the form
\begin{subequations}
  \begin{equation}
    \label{tem_2:sys}
    \Delta(s) \cdot 
      \begin{pmatrix}
        X_1(s)\\
        X_2(s)\\
        \vdots\\
        X_d(s)
      \end{pmatrix}
    = 
    \begin{pmatrix}
      b_1(s)\\
      b_2(s)\\
      \vdots\\
      b_d(s)
    \end{pmatrix}
  \end{equation}
  where
  \[
    b_i(s) = s^{\alpha_i-1} x_i(0), \quad i=1,\dots,d,
  \]
  and
  \begin{equation}
    \label{eq:def-Delta}
    \Delta(s)
    =
    \begin{pmatrix}
      s^{\alpha_1} - a_{11} & -a_{12}              & \cdots & -a_{1d} \\
       -a_{21}           & s^{\alpha_2} - a_{22} & \cdots & -a_{2d} \\
            \vdots      &     \ddots          &  \ddots  & \vdots \\
         -a_{d1}         &      \cdots        & - a_{dd-1} & s^{\alpha_d} - a_{dd}
    \end{pmatrix} 
    =
    \mathop{\mathrm{diag}} (s^{\alpha_1}, \ldots, s^{\alpha_d}) - A.
  \end{equation}
\end{subequations}
Using a standard result from the Laplace transform based stability theory \cite[Theorem 1]{DLL_2007}, 
we immediately obtain the following criterion on the asymptotic behavior of the system \eqref{eq:system2hom}:

\begin{theorem}\label{Stable}
  Consider the homogeneous multi-order system \eqref{eq:system2hom} 
  and let the function $\Delta$ be defined as in \eqref{eq:def-Delta}.
  If all the roots of the characteristic equation $\det \Delta(s) = 0$ have negative real parts, 
  then all solutions of the system \eqref{eq:system2hom} converge to zero at infinity.
\end{theorem}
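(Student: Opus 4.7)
The plan is to build on the Laplace-transform setup already developed in the paragraphs immediately preceding the statement, and then invoke the cited Laplace-based stability criterion \cite[Theorem~1]{DLL_2007}.

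First, in order to legitimate taking the Laplace transform of each solution component, exponential boundedness of $x_i(t)$ must be checked. This follows from Theorem~\ref{exist_Theorem} together with the coefficient bound \eqref{eq:bound-beta}: the generalized power series for $x_j(t)$ is majorized by a Mittag-Leffler function $E_{\alpha^*}(c_2 t^{\alpha^*})$ (as the argument of Lemma~\ref{lemma:general-system-series-convergence} directly extends from $d=2$ to general $d$), and hence each $x_i$ grows at most exponentially. Consequently $X_i(s) \coloneqq \mathcal{L}[x_i](s)$ is well-defined on some right half-plane $\{\operatorname{Re} s > \sigma_0\}$.

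Next, applying the standard rule $\mathcal{L}[D_*^{\alpha_i} x_i](s) = s^{\alpha_i} X_i(s) - s^{\alpha_i-1} x_i(0)$ to each equation of \eqref{eq:system2hom} yields the linear algebraic system \eqref{tem_2:sys}, and Cramer's rule gives
\begin{equation*}
  X_i(s) = \frac{N_i(s)}{\det \Delta(s)},
\end{equation*}
where $N_i(s)$ is the determinant of $\Delta(s)$ with its $i$-th column replaced by $(s^{\alpha_1-1}x_1(0), \dots, s^{\alpha_d-1}x_d(0))^{\top}$. Thus, on the principal sheet of the relevant Riemann surface, each $X_i(s)$ is meromorphic with poles contained in the zero set of $\det \Delta(s)$, which by hypothesis lies entirely in the open left half-plane; the only other singular feature is the branch point at $s=0$ induced by the fractional powers.

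With $X_i(s)$ in this explicit rational-fractional form, the hypotheses of \cite[Theorem~1]{DLL_2007} are satisfied and its conclusion immediately gives $\lim_{t\to\infty} x_i(t) = 0$ for every $i$. The main obstacle is the careful verification that the precise hypotheses of \cite[Theorem~1]{DLL_2007} hold: one must check that the apparent branch contribution at $s=0$ does not obstruct the Bromwich inversion (the factors $s^{\alpha_i-1}$ in $N_i$ together with the structure of $\det\Delta(s)$ near $s=0$ yield at worst an integrable singularity there) and that $X_i(s) \to 0$ sufficiently fast as $|s| \to \infty$ on vertical lines in the right half-plane (which reduces to a routine estimate on the cofactors of $\Delta(s)^{-1}$, since the diagonal entries $s^{\alpha_i}-a_{ii}$ of $\Delta(s)$ dominate the off-diagonal constants for large $|s|$). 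Once these two bookkeeping items are dispatched, the cited theorem closes the argument.
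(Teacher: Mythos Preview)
Your proposal is correct and follows essentially the same route as the paper: the paper also argues exponential boundedness from the series representation and the estimate \eqref{eq:bound-beta}, rewrites the Laplace-transformed system as \eqref{tem_2:sys}, and then simply invokes \cite[Theorem~1]{DLL_2007} to conclude. Your additional remarks on the branch point at $s=0$ and the decay of $X_i(s)$ for large $|s|$ go slightly beyond what the paper spells out, but they are in the same spirit and do not constitute a different approach.
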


\begin{remark}
  In the triangular case considered in Subsection \ref{subs:triangular}, we were able to extend our results
  derived for homogeneous equations also to the inhomgeneous case, cf.\ Corollary \ref{cor:inhomogeneous}. 
  This was possible mainly because the triangular structure allowed us to handle the individual equations
  of the given system in a step-by-step manner one at a time which made it possible to employ the 
  variation-of-constants formula that is available for scalar equations or single-order systems.
  In the general case considered here, a suitable generalization of the variation-of-constants formula
  to the setting of multi-order systems is not readily available and does not appear to be straightforward
  to derive. The authors plan to address this question in a future work.
\end{remark}


\appendix
\section{Auxiliary results}
\label{sec:appendix}

In this appendix we collect some auxiliary results that we used in the proofs of our theorems above.
For the formulation of these auxiliary results we shall use the notation
\begin{equation*}
  \Lambda^{\mathrm s}_\alpha \coloneqq \left\{ \lambda \in \C \setminus \{ 0 \} : |\arg{(\lambda)}| > \frac {\alpha \pi} {2} \right\}
\end{equation*}
and
\begin{equation*}
  \Lambda^{\mathrm u}_\alpha \coloneqq \left\{ \lambda \in \C \setminus \{ 0 \} : |\arg{(\lambda)}| < \frac {\alpha \pi} {2} \right\}
\end{equation*}
where the superscripts ``s'' and ``u'' can be interpreted as ``stable region'' and ``unstable region'', respectively.
We note that the lemmas below can be interpreted as generalizations of some results provided in \cite{Cong_2} where similar statements
have been derived under more restrictive assumptions on the parameter $\lambda$.

\begin{lemma} \label{lemma3}
  Let $\lambda$ be an arbitrary complex number and $\alpha\in (0,1]$. 
  There exists a positive real number $m(\alpha, \lambda)$ such that for every $t > 0$ the following estimates hold:
  \begin{itemize}
  \item [(i)] If $\lambda \in \Lambda_\alpha^{\mathrm u}$ then 
    \begin{align*} 
      \left| E_\alpha(\lambda t^{\alpha}) - \frac{1}{\alpha} \exp{(\lambda^{1/\alpha} t)} \right| 
      & \le m(\alpha, \lambda) \min \{ t^{-\alpha}, 1 \} ,\\
      \left| t^{\alpha-1} E_{\alpha, \alpha} (\lambda t^{\alpha}) - \frac{1}{\alpha} \lambda^{1/\alpha-1} \exp{(\lambda^{1/\alpha} t)} \right| 
      & \le m(\alpha, \lambda) \min \{ t^{-1-\alpha}, t^{-1+\alpha} \}.
    \end{align*}
  \item [(ii)] If $\lambda \in \Lambda_\alpha^{\mathrm s}$ then 
    \[ \left| t^{\alpha-1} E_{\alpha, \alpha} (\lambda t^{\alpha}) \right| \le m(\alpha, \lambda) \min \{ t^{-1-\alpha}, t^{-1+\alpha} \}. \]
  \end{itemize}
\end{lemma}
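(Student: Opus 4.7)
My strategy is to derive all three estimates from the classical asymptotic expansions of the Mittag-Leffler functions at infinity (see e.g.\ \cite[Theorem~4.3 and Proposition~3.6]{GorenfloEtal2014}) combined with elementary boundedness on the compact interval $[0,1]$, and then to take the maximum of the two resulting constants to form $m(\alpha,\lambda)$.

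For part (i), I would first observe that under the hypothesis $\lambda\in\Lambda^{\mathrm u}_\alpha$ the point $z\coloneqq\lambda t^\alpha$ lies in the sector $|\arg z|<\alpha\pi/2$ for every $t>0$, so the expansion valid in that sector gives
\[
  E_\alpha(\lambda t^\alpha)
  =
  \frac{1}{\alpha}\exp(\lambda^{1/\alpha}t)-\frac{(\lambda t^\alpha)^{-1}}{\Gamma(1-\alpha)}+O(t^{-2\alpha}),\qquad t\to\infty.
\]
This directly supplies a bound of order $t^{-\alpha}$ valid for $t\geq 1$, while on $(0,1]$ continuity of $E_\alpha(\lambda t^\alpha)$ and of $\exp(\lambda^{1/\alpha}t)$ at $t=0$ forces their difference to be uniformly bounded, giving the second branch of the $\min$. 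I would treat the $E_{\alpha,\alpha}$ estimate analogously, starting from the expansion
\[
  E_{\alpha,\alpha}(\lambda t^\alpha)
  =
  \frac{1}{\alpha}\lambda^{1/\alpha-1}t^{1-\alpha}\exp(\lambda^{1/\alpha}t)+O(t^{-2\alpha}),
\]
using $(\lambda t^\alpha)^{(1-\alpha)/\alpha}=\lambda^{1/\alpha-1}t^{1-\alpha}$, multiplying by $t^{\alpha-1}$ to recover the leading term of interest and an error of order $t^{-1-\alpha}$ for large $t$; on $(0,1]$, boundedness of $E_{\alpha,\alpha}(\lambda t^\alpha)$ (which tends to $1/\Gamma(\alpha)$) and of $\exp(\lambda^{1/\alpha}t)$ turns the left-hand side into $O(t^{\alpha-1})=O(t^{-1+\alpha})$, completing the $\min\{t^{-1-\alpha},t^{-1+\alpha}\}$ bound.

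For part (ii), when $\lambda\in\Lambda^{\mathrm s}_\alpha$ the point $\lambda t^\alpha$ lies in the sector $|\arg z|>\alpha\pi/2$, where the corresponding expansion contains no exponential term. Since with $\beta=\alpha$ the leading coefficient $1/\Gamma(\alpha-\alpha)=1/\Gamma(0)$ equals zero, the expansion reduces to $E_{\alpha,\alpha}(\lambda t^\alpha)=O(t^{-2\alpha})$ as $t\to\infty$, and multiplication by $t^{\alpha-1}$ produces the $t^{-1-\alpha}$ bound for $t\geq 1$. On $(0,1]$ I would invoke continuity at zero once more to get the $t^{\alpha-1}$ bound, and the two pieces combine as required.

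The main obstacle will be making the $O$-constants in the expansions uniform in $t\in[1,\infty)$ so that they can be packed into a single constant depending only on $\alpha$ and $\lambda$. I would handle this by selecting, from the Hankel-contour proof of the expansion, a threshold $T_0=T_0(\alpha,\lambda)$ beyond which an explicit constant is available, and then absorbing the finite supremum of the relevant remainder on the compact interval $[1,T_0]$ into $m(\alpha,\lambda)$. I also need to fix the branch of $\lambda^{1/\alpha}$ (principal branch, consistent with $|\arg\lambda|<\alpha\pi/2$ so that $\exp(\lambda^{1/\alpha}t)$ is well defined), and to verify that the boundary case $\alpha=1$ is harmless, since there $E_1(z)=E_{1,1}(z)=e^z$ and the left-hand side of the first estimate vanishes identically while the others reduce to a direct computation.
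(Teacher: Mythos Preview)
Your proposal is correct and follows essentially the same route as the paper: split at $t=1$, use continuity on $(0,1]$ to obtain the $1$ and $t^{-1+\alpha}$ bounds, and for $t>1$ invoke the standard asymptotic expansions of $E_{\alpha,\beta}$ (with $p=1$ for the first estimate and $p=2$ for the other two, noting $1/\Gamma(0)=0$) to obtain the $t^{-\alpha}$ and $t^{-1-\alpha}$ bounds. Your explicit remarks on making the $O$-constants uniform via a threshold $T_0$ and on fixing the principal branch of $\lambda^{1/\alpha}$ are points the paper leaves implicit, but the underlying argument is the same.
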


\begin{proof}
  In the case $\alpha = 1$ the results are trivially true because then $E_\alpha(z) = E_{\alpha, \alpha}(z) = \exp(z)$. 
  We therefore only have to deal with the case $0 < \alpha < 1$ explicitly.

  Let us start with the case $0 < t \le 1$. In this case, the minimum in the first claim of (i) has the value~$1$. 
  Thus, this claim is an immediate consequence of the fact that the expression on its
  left-hand side is a continuous function of $t \in [0,1]$. Similarly, we can see --- in view of the continuity of the
  Mittag-Leffler functions and the exponentials on $[0,1]$ --- that the expressions on the left-hand sides of the two other claims
  can be bounded by $O(t^{-1+\alpha}) = O(\min \{ t^{-1-\alpha}, t^{-1+\alpha} \})$.  

  The statements for $t > 1$ (where the minima are always attained by the first expression in the braces) 
  immediately follow from well-known results about the asymptotic behavior of Mittag-Leffler functions; 
  specifically, we have (cf., e.g., \cite[Proposition~3.6 and Theorem~4.3]{GorenfloEtal2014} or \cite[Theorems~1.3 and~1.4]{Podlubny1999})
  that
  \begin{equation}
   \label{eq:proof-3a}
    E_{\alpha, \beta}(z) 
    = 
   \frac 1 \alpha z^{(1-\beta)/\alpha} \exp( z^{1/\alpha}) - \sum_{k=1}^p \frac {z^{-k}} {\Gamma(\beta - \alpha k)} 
   + O( |z|^{-p-1} )
   \qquad 
   \mbox{ for }
   z \in \Lambda_\alpha^{\mathrm u}
  \end{equation}
  and
  \begin{equation}
    \label{eq:proof-3b}
    E_{\alpha, \beta}(z) 
    = 
    - \sum_{k=1}^p \frac {z^{-k}} {\Gamma(\beta - \alpha k)} 
   + O( |z|^{-p-1} )
   \qquad 
   \mbox{ for }
   z \in \Lambda_\alpha^{\mathrm s}
  \end{equation}
  hold for arbitrary $p \in \N$ and $|z| \to \infty$.
  Upon choosing $t > 0$ and $z \coloneqq \lambda t^\alpha$, we then observe that the relation $z \in \Lambda_\alpha^{\mathrm s}$ holds 
  if and only if $\lambda  \in \Lambda_\alpha^{\mathrm s}$, and an analog equivalence exists for $\Lambda_\alpha^{\mathrm u}$.
  Using this approach, the first statement of (i) follows from eq.\ \eqref{eq:proof-3a} with $p=1$. 
  Similarly, the second statement of (i) and the statement of (ii) follow from eqs.\ \eqref{eq:proof-3a} and \eqref{eq:proof-3b}, respectively,
  upon setting $p=2$ and noticing that the summands for $k=1$ vanish because they contain a factor $1 / \Gamma(\alpha - \alpha) = 1 / \Gamma(0) = 0$.
\end{proof}

\begin{lemma}\label{lemma4}
  Let $\lambda \in \C \setminus \{ 0 \}$ and $\alpha \in (0,1]$. 
  There exists a positive constant $K(\alpha, \lambda)$ such that for all $t \ge 1$ the following estimates hold: 
  \begin{itemize}
  \item [(i)] If $\lambda\in\Lambda_\alpha^{\mathrm u}$ then
    \begin{align*}
      \int_t^\infty \left| \lambda^{1/\alpha-1} E_\alpha(\lambda t^\alpha) \exp(-\lambda^{1/\alpha} \tau) \right| \, d \tau
      & \leq K(\alpha, \lambda),\\
      \int_0^t \left|
      \left( (t-\tau)^{\alpha-1} E_{\alpha,\alpha}(\lambda(t-\tau)^{\alpha}) - \lambda^{1/\alpha-1} E_\alpha(\lambda t^\alpha) \exp(-\lambda^{1/\alpha} \tau)\right)
      \right| \, d \tau
      & \leq K(\alpha, \lambda).
    \end{align*}
  \item [(ii)] If $\lambda \in \Lambda_\alpha^{\mathrm s}$ then
    \[
    \int_0^t \left| (t-\tau)^{\alpha-1} E_{\alpha,\alpha}(\lambda (t-\tau)^{\alpha}) \right| \, d \tau \leq K(\alpha, \lambda).
    \]
  \end{itemize}
\end{lemma}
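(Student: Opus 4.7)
The plan is to reduce each of the three integrals to an elementary estimate by substituting the asymptotic representations from Lemma~\ref{lemma3} for the Mittag-Leffler functions appearing in the integrands. The case $\alpha=1$ is disposed of immediately, since then $E_\alpha = E_{\alpha,\alpha} = \exp$; the integrand of the second estimate in (i) vanishes identically, and the remaining two bounds reduce to elementary exponential computations using that $\operatorname{Re}(\lambda)>0$ in $\Lambda_1^{\mathrm u}$ and $\operatorname{Re}(\lambda)<0$ in $\Lambda_1^{\mathrm s}$. From here on I assume $0<\alpha<1$.

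For part (ii), I substitute $s = t-\tau$ to rewrite the integral as $\int_0^t s^{\alpha-1}|E_{\alpha,\alpha}(\lambda s^\alpha)|\,ds$. By Lemma~\ref{lemma3}(ii), the integrand is dominated by $m(\alpha,\lambda)\min\{s^{-1-\alpha}, s^{-1+\alpha}\}$, and the key elementary fact I shall use repeatedly is that, for any $t\ge 1$,
\begin{equation*}
   \int_0^t \min\{s^{-1-\alpha}, s^{-1+\alpha}\}\,ds
   = \int_0^1 s^{-1+\alpha}\,ds + \int_1^t s^{-1-\alpha}\,ds
   \le \tfrac{2}{\alpha}.
\end{equation*}
For the first estimate in (i), I note that $E_\alpha(\lambda t^\alpha)$ is constant in $\tau$ and factor it out. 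Since $\lambda\in\Lambda_\alpha^{\mathrm u}$ forces $|\arg\lambda^{1/\alpha}|<\pi/2$, the quantity $\operatorname{Re}(\lambda^{1/\alpha})$ is strictly positive, so $\int_t^\infty |\exp(-\lambda^{1/\alpha}\tau)|\,d\tau = \exp(-\operatorname{Re}(\lambda^{1/\alpha})t)/\operatorname{Re}(\lambda^{1/\alpha})$. Combined with the bound $|E_\alpha(\lambda t^\alpha)|\le \tfrac{1}{\alpha}\exp(\operatorname{Re}(\lambda^{1/\alpha})t)+m(\alpha,\lambda)$ from Lemma~\ref{lemma3}(i) (valid for $t\ge 1$), the product telescopes to a uniformly bounded quantity.

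The main obstacle, and the only point requiring real care, is the second estimate in (i), because the two terms in the integrand individually blow up near $\tau = 0$ and $\tau = t$ while their difference stays integrable. The idea is to exhibit the cancellation explicitly. Using Lemma~\ref{lemma3}(i) in both forms, I write
\begin{equation*}
   (t-\tau)^{\alpha-1} E_{\alpha,\alpha}(\lambda(t-\tau)^\alpha)
   = \tfrac{1}{\alpha}\lambda^{1/\alpha-1}\exp(\lambda^{1/\alpha}(t-\tau)) + r(t-\tau),
\end{equation*}
\begin{equation*}
   \lambda^{1/\alpha-1} E_\alpha(\lambda t^\alpha)\exp(-\lambda^{1/\alpha}\tau)
   = \tfrac{1}{\alpha}\lambda^{1/\alpha-1}\exp(\lambda^{1/\alpha}(t-\tau)) + \lambda^{1/\alpha-1}e(t)\exp(-\lambda^{1/\alpha}\tau),
\end{equation*}
where $|r(s)|\le m(\alpha,\lambda)\min\{s^{-1-\alpha},s^{-1+\alpha}\}$ and $|e(t)|\le m(\alpha,\lambda)\min\{t^{-\alpha},1\}$. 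The leading exponentials cancel, so the difference is $r(t-\tau)-\lambda^{1/\alpha-1}e(t)\exp(-\lambda^{1/\alpha}\tau)$. The $L^1$-norm of the first remainder in $\tau$ is bounded by $2m(\alpha,\lambda)/\alpha$ by the substitution $s=t-\tau$ and the inequality displayed above; the second remainder, after pulling out the $\tau$-independent factor $|\lambda^{1/\alpha-1}e(t)|\le|\lambda^{1/\alpha-1}|m(\alpha,\lambda)$, is bounded by $|\lambda^{1/\alpha-1}|m(\alpha,\lambda)/\operatorname{Re}(\lambda^{1/\alpha})$. Summing the three contributions produces a constant $K(\alpha,\lambda)$ that works uniformly for all $t\ge 1$.
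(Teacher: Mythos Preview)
Your proof is correct and follows essentially the same route as the paper: both arguments substitute the asymptotic decompositions of Lemma~\ref{lemma3} into the integrands, exploit the elementary bound $\int_0^t \min\{s^{-1-\alpha},s^{-1+\alpha}\}\,ds \le 2/\alpha$ for $t\ge 1$, and for the second estimate in (i) exhibit the cancellation of the leading exponential $\tfrac{1}{\alpha}\lambda^{1/\alpha-1}\exp(\lambda^{1/\alpha}(t-\tau))$ between the two terms before bounding the two remainders separately. Your treatment of the $e(t)$ remainder uses the cruder bound $|e(t)|\le m(\alpha,\lambda)$ where the paper keeps the extra factor $t^{-\alpha}$, but this makes no difference for the uniform-in-$t$ conclusion.
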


\begin{proof}
  Once again the statements are trivially true for $\alpha = 1$. 
  The proof of the remaining cases is very similar to the proof of \cite[Lemma 5]{Cong_2}.

  For the first claim of part (i), the first statement of Lemma \ref{lemma3}(i) allows us to proceed as follows: 
  \begin{align*}
    \int_t^\infty \left| \lambda^{1/\alpha-1} E_\alpha(\lambda t^\alpha) \exp(-\lambda^{1/\alpha} \tau) \right| \, d \tau
      & \le   | \lambda |^{1/\alpha-1} \int_t^\infty  \left( \left| \frac 1 \alpha \exp(\lambda^{1/\alpha} t) \right|
          + \frac {m(\alpha, \lambda)} { t^\alpha} \right) \left| \exp(-\lambda^{1/\alpha} \tau) \right| \, d \tau \\
      &  =  | \lambda |^{1/\alpha-1} \frac 1 \alpha \int_t^\infty \left| \exp(\lambda^{1/\alpha} (t-\tau)) \right| \, d \tau \\
          &  \phantom{= } {}      + | \lambda |^{1/\alpha-1} \frac {m(\alpha, \lambda)} {t^\alpha} \int_t^\infty \left| \exp(-\lambda^{1/\alpha} \tau) \right| \, d \tau.
  \end{align*}
  For the evaluation of these integrals we recall that $\lambda \in \Lambda_\alpha^{\mathrm u}$, 
  and hence $|\arg \lambda^{1/\alpha}| < \pi/2$ which implies that $\Re \lambda^{1/\alpha} > 0$. 
  Making use of this inequality in combination with the identity $|\exp(\lambda^{1/\alpha} z)| = \exp( \Re \lambda^{1/\alpha} z)$ for $z \in \R$, we conclude
  \begin{equation*}
    \int_t^\infty \left| \exp(\lambda^{1/\alpha} (t-\tau)) \right| \, d \tau 
    = 
    \int_{-\infty}^0  \exp( \Re \lambda^{1/\alpha} u) \, d u 
    = \frac 1 {\Re \lambda^{1/\alpha}}
  \end{equation*}
  and
  \begin{equation*}
     \int_t^\infty \left| \exp(-\lambda^{1/\alpha} \tau) \right| \, d \tau
     = \int_t^\infty \exp(- \Re \lambda^{1/\alpha} \tau) \, d \tau 
     < \int_0^\infty \exp(- \Re \lambda^{1/\alpha} \tau) \, d \tau 
     = \frac 1 {\Re \lambda^{1/\alpha}}.
  \end{equation*}
  These estimates conclude this part of the proof.

  The proof of the second claim of part (i) uses the second statement of Lemma \ref{lemma3}(i).
  Specifically, that result allows us to write
  \begin{align}
    \nonumber
    \MoveEqLeft \int_0^t \left|
     (t-\tau)^{\alpha-1} E_{\alpha,\alpha}(\lambda(t-\tau)^{\alpha}) - \lambda^{1/\alpha-1} E_\alpha(\lambda t^\alpha) \exp(-\lambda^{1/\alpha} \tau)
    \right| \, d \tau  \\
    \label{eq:proof4a}
    \le &
      \int_0^t \left| \frac 1 \alpha \lambda^{1/\alpha - 1} \exp(\lambda^{1/\alpha} (t - \tau)) 
                        - \lambda^{1/\alpha-1} E_\alpha(\lambda t^\alpha) \exp(-\lambda^{1/\alpha} \tau ) \right| \, d \tau \\
    \nonumber
      & 
      {} + m(\alpha, \lambda) \int_0^t \min \{ (t-\tau)^{-1-\alpha}, (t-\tau)^{-1+\alpha} \} \, d \tau
  \end{align}
  Since we have assumed that $t \ge 1$, we may bound the last integral as follows:
  \begin{align}
    \nonumber
    \int_0^t \min \{ (t-\tau)^{-1-\alpha}, (t-\tau)^{-1+\alpha} \} \, d \tau 
    & = 
    \nonumber
    \int_0^t \min \{ \tau^{-1-\alpha}, \tau^{-1+\alpha} \} \, d \tau \\
    \nonumber
    & =  
    \int_0^1 \tau^{-1+\alpha} \, d \tau + \int_1^t \tau^{-1-\alpha} \, d \tau \\
    \nonumber
    & = \frac 1 \alpha + \frac 1 {-\alpha} \left( t^{-\alpha} - 1 \right) \\
    \label{eq:proof4b}
    & = \frac 2 \alpha - \frac 1 \alpha t^{-\alpha} 
     <  \frac 2 \alpha.
  \end{align}
  Moreover, for the first integral on the right-hand side of eq.\ \eqref{eq:proof4a} we may invoke the first statement of Lemma \ref{lemma3}(i) and conclude that
  \begin{align*}
    \MoveEqLeft
    \int_0^t \left| \frac 1 \alpha \lambda^{1/\alpha - 1} \exp(\lambda^{1/\alpha} (t - \tau)) 
                        - \lambda^{1/\alpha-1} E_\alpha(\lambda t^\alpha) \exp(-\lambda^{1/\alpha} \tau ) \right| \, d \tau  \\
    & = 
        |\lambda|^{1/\alpha - 1} \left| \frac 1 \alpha \exp(\lambda^{1/\alpha} t) - E_\alpha(\lambda t^\alpha) \right| 
            \int_0^t | \exp (-\lambda^{1/\alpha} \tau) | \, d \tau \\
    & \le  |\lambda|^{1/\alpha - 1} m(\alpha, \lambda) t^{-\alpha}  \int_0^t | \exp (-\lambda^{1/\alpha} \tau) | \, d \tau \\
    & =    |\lambda|^{1/\alpha - 1} m(\alpha, \lambda) t^{-\alpha} \int_0^t  \exp (- \Re \lambda^{1/\alpha} \tau)  \, d \tau \\
    & =    |\lambda|^{1/\alpha - 1} m(\alpha, \lambda) t^{-\alpha} \frac 1 {\Re \lambda^{1/\alpha}} \left ( 1 - \exp (-\Re \lambda^{1/\alpha} t) \right ).
  \end{align*}
  As above, our assumption that $\lambda \in \Lambda_\alpha^{\mathrm u}$ implies that $\Re \lambda^{1/\alpha} > 0$, and hence this last expression 
  is uniformly bounded for all $t \ge 1$. This completes the proof of the second statement of part (i).

  Finally, for part (ii), Lemma \ref{lemma3}(ii) and the fact that $t \ge 1$ allow us to estimate as follows:
  \begin{align*}
    \int_0^t \left| (t-\tau)^{\alpha-1} E_{\alpha,\alpha}(\lambda (t-\tau)^{\alpha}) \right| \, d \tau
       & =    \int_0^t \left| \tau^{\alpha-1} E_{\alpha,\alpha}(\lambda \tau^{\alpha}) \right| \, d \tau \\
       & \le  m(\alpha, \lambda) \int_0^t \min \{ \tau^{-1-\alpha}, \tau^{-1+\alpha} \} \, d \tau \\
       & <    m(\alpha, \lambda) \frac 2 \alpha
  \end{align*}
  where the last estimate uses the result \eqref{eq:proof4b}. Thus the desired result follows.
\end{proof}

\begin{lemma}\label{LimitLemma}
  For any continuous and bounded function $f: [0, \infty) \rightarrow \C$, $\alpha \in (0,1]$ and $\lambda \in \Lambda_\alpha^{\mathrm u}$, we have
  \begin{equation}\label{Eq4}
    \lim_{t\to\infty} \int_0^t (t-\tau)^{\alpha-1} \frac{E_{\alpha,\alpha}(\lambda (t-\tau)^\alpha)}{E_\alpha(\lambda t^\alpha)} f(\tau) \, d \tau
    = \lambda^{1/\alpha-1} \int_0^\infty \exp(-\lambda^{1/\alpha} \tau) f(\tau) \, d \tau.
  \end{equation}
\end{lemma}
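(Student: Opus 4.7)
The plan is to exploit the asymptotic expansions already codified in Lemmas \ref{lemma3} and \ref{lemma4}, which match exactly the two pieces that arise from a natural decomposition of the integrand. First I note that $\lambda \in \Lambda_\alpha^{\mathrm u}$ implies $|\arg \lambda^{1/\alpha}| < \pi/2$, hence $\Re \lambda^{1/\alpha} > 0$; this guarantees both that $\exp(-\lambda^{1/\alpha}\tau) f(\tau)$ is absolutely integrable on $[0,\infty)$ (so the right-hand side is well defined) and that $|E_\alpha(\lambda t^\alpha)| \to \infty$ as $t \to \infty$ by Lemma \ref{lemma3}(i).

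The key algebraic step is to write, for each $t \ge 1$,
\[
  R(t,\tau) \coloneqq (t-\tau)^{\alpha-1} E_{\alpha,\alpha}(\lambda(t-\tau)^\alpha) - \lambda^{1/\alpha-1} E_\alpha(\lambda t^\alpha) \exp(-\lambda^{1/\alpha}\tau),
\]
so that, after dividing by $E_\alpha(\lambda t^\alpha)$ and multiplying by $f(\tau)$, the integral on the left-hand side of \eqref{Eq4} splits as
\[
  \int_0^t \lambda^{1/\alpha-1} \exp(-\lambda^{1/\alpha}\tau) f(\tau)\, d\tau
  \;+\;
  \frac{1}{E_\alpha(\lambda t^\alpha)} \int_0^t R(t,\tau) f(\tau)\, d\tau.
\]

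Next I would handle the two pieces separately. The first integral converges to $\lambda^{1/\alpha-1}\int_0^\infty \exp(-\lambda^{1/\alpha}\tau) f(\tau)\, d\tau$ as $t \to \infty$ by dominated convergence, with majorant $\|f\|_\infty |\lambda|^{1/\alpha-1} \exp(-\Re\lambda^{1/\alpha}\tau)$. For the second piece, Lemma \ref{lemma4}(i) supplies the uniform bound $\int_0^t |R(t,\tau)|\, d\tau \le K(\alpha,\lambda)$ for all $t \ge 1$; combined with boundedness of $f$ this yields
\[
  \left| \int_0^t R(t,\tau) f(\tau)\, d\tau \right| \le \|f\|_\infty K(\alpha,\lambda).
\]
Finally, since Lemma \ref{lemma3}(i) gives $E_\alpha(\lambda t^\alpha) = \tfrac{1}{\alpha} \exp(\lambda^{1/\alpha} t) + O(t^{-\alpha})$ and $\Re \lambda^{1/\alpha} > 0$, the prefactor $1/E_\alpha(\lambda t^\alpha)$ decays exponentially, so the second piece vanishes in the limit.

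There is no real obstacle here: the auxiliary lemmas are tailored precisely to make this decomposition work. The only point that requires a small amount of care is verifying that the chosen splitting is the ``right'' one, in the sense that the difference $R(t,\tau)$ can be controlled uniformly in $t$ after integration in $\tau$ (rather than merely pointwise); this is exactly the content of the second estimate in Lemma \ref{lemma4}(i), which therefore does the heavy lifting.
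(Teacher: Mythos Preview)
Your proof is correct and, in fact, cleaner than the paper's own argument. The paper proceeds differently: it first replaces the denominator $E_\alpha(\lambda t^\alpha)$ by $\tfrac{1}{\alpha}\exp(\lambda^{1/\alpha} t)$ (using the first estimate of Lemma~\ref{lemma3}(i) to justify that this substitution introduces only an $o(1)$ multiplicative error), and then splits the resulting integral over $[0,t-1]$ and $[t-1,t]$; the near-diagonal piece is shown to be uniformly bounded and hence killed by the exponential denominator, while on $[0,t-1]$ the second estimate of Lemma~\ref{lemma3}(i) controls the difference between $\alpha(t-\tau)^{\alpha-1}E_{\alpha,\alpha}(\lambda(t-\tau)^\alpha)$ and $\lambda^{1/\alpha-1}\exp(\lambda^{1/\alpha}(t-\tau))$. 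In other words, the paper works entirely at the level of the pointwise asymptotics in Lemma~\ref{lemma3} and never invokes Lemma~\ref{lemma4}. Your route is shorter because you recognize that the second bound in Lemma~\ref{lemma4}(i) already packages the whole $L^1$ control of $R(t,\cdot)$ in one stroke; the paper essentially reproves a variant of that estimate inside the proof of Lemma~\ref{LimitLemma}. The only small omission in your write-up is the well-definedness of the left-hand side for all $t>0$ (the paper cites Wiman's result that $E_\alpha$ has no zeros in $\Lambda_\alpha^{\mathrm u}$), but since you observe $|E_\alpha(\lambda t^\alpha)|\to\infty$, this is handled for all large $t$, which is all the limit statement needs.
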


\begin{proof}
  Again, the case $\alpha = 1$ is trivial.

  For $0 < \alpha < 1$, we first remark that the expression on the left-hand side of eq.\ \eqref{Eq4} is well defined: 
  The denominator is non-zero because, as shown by Wiman \cite[pp.~225--226]{Wiman}, 
  the Mittag-Leffler function $E_\alpha$ does not have any zeros in $\Lambda_\alpha^{\mathrm u}$. 
  Thus, since $\lambda \in \Lambda_\alpha^{\mathrm u}$ implies that $t \lambda \in \Lambda_\alpha^{\mathrm u}$ for all $t > 0$, 
  we conclude that $E_\alpha(\lambda t^\alpha) \ne 0$ for all $t > 0$.

  Next we note that the integral on the right-hand side of eq.\ \eqref{Eq4} exists because $f$ is assumed to be continuous (which asserts the existence
  of the integral over any compact subinterval $[0,T]$ with arbitrary $T > 0$) and bounded which admits us to bound the absolute value of the integrand 
  by
  \begin{equation*}
    |\exp(-\lambda^{1/\alpha} \tau) f(\tau)| \le \exp (-\Re \lambda^{1/\alpha} \tau) \cdot \sup_{t \ge 0} | f(t) | .
  \end{equation*}
  As we already noted in earlier proofs, $\Re \lambda^{1/\alpha} > 0$, and hence this bound provides a convergent majorant for the integral over $[0, \infty)$, 
  thus asserting the existence and finiteness of the improper integral on the right-hand side of eq.\ \eqref{Eq4}.

  Then, the first statement of Lemma \ref{lemma3}(i) implies that $| E_\alpha (\lambda t^\alpha) |$ exhibits an unbounded growth as $t \to \infty$ and
  hence that
  \begin{equation*}
    \lim_{t \to \infty}  \frac {\frac 1 \alpha \exp(\lambda^{1/\alpha} t) } {E_\alpha(\lambda t^\alpha)} = 1.
  \end{equation*}
  It thus follows that 
  \begin{equation*}
    \lim_{t\to\infty} \int_0^t (t-\tau)^{\alpha-1} \frac{E_{\alpha,\alpha}(\lambda (t-\tau)^\alpha)}{E_\alpha(\lambda t^\alpha)} f(\tau) \, d \tau
    =  \lim_{t\to\infty} \alpha \int_0^t (t-\tau)^{\alpha-1} \frac{E_{\alpha,\alpha}(\lambda (t-\tau)^\alpha)}{\exp(\lambda^{1/\alpha} t)} f(\tau) \, d \tau
  \end{equation*}
  if one of the limits exists (which immediately implies the existence of the other one). 

  For $t > 1$ we see that
  \begin{align*}
    \left | \alpha  \int_{t-1}^t (t-\tau)^{\alpha-1} E_{\alpha,\alpha}(\lambda (t-\tau)^\alpha) f(\tau) \, d \tau \right|
    & \le   \sup_{u \ge 0} | f(u) | \cdot \sup_{0 \le u \le 1} | E_{\alpha,\alpha}(\lambda u^\alpha) | \cdot \alpha \int_0^1 u^{\alpha-1} \, d u \\
    & =     \sup_{u \ge 0} | f(u) | \cdot \sup_{0 \le u \le 1} | E_{\alpha,\alpha}(\lambda u^\alpha) | .
  \end{align*}
  Evidently, the upper bound depends on $f$, $\alpha$ and $\lambda$ but not on $t$. 
  It therefore follows, once again using the unbounded growth of $|\exp (\lambda^{1/\alpha} t)|$ for $t \to \infty$, that
  \begin{equation*}
    \lim_{t \to \infty} \alpha  \int_{t-1}^t (t-\tau)^{\alpha-1} \frac {E_{\alpha,\alpha}(\lambda (t-\tau)^\alpha)}{\exp(\lambda^{1/\alpha} t)} f(\tau) \, d \tau 
    = 0. 
  \end{equation*}

  In order to complete the proof of Lemma \ref{LimitLemma}, it therefore suffices to show that
  \begin{equation}
    \label{eq:proof-limitlemma}
    \lim_{t\to\infty} \alpha \int_0^{t-1} (t-\tau)^{\alpha-1} \frac{E_{\alpha,\alpha}(\lambda (t-\tau)^\alpha)}{\exp(\lambda^{1/\alpha} t)} f(\tau) \, d \tau
     = \lambda^{1/\alpha-1} \int_0^\infty \exp(-\lambda^{1/\alpha} \tau) f(\tau) \, d \tau.
  \end{equation}
  To this end, we recall that the second statement of Lemma \ref{lemma3}(i) implies 
  \begin{align*}
    \MoveEqLeft
      \left| \int_0^{t-1} \frac {\alpha (t - \tau)^{\alpha-1} E_{\alpha, \alpha} (\lambda (t - \tau)^\alpha) - \lambda^{1/\alpha - 1} \exp( \lambda^{1/\alpha} (t - \tau) )}
                             {\exp (\lambda^{1/\alpha} t)} f(\tau) \, d \tau \right|  \\
    & \le 
    \sup_{u \ge 0} | f(u) | \cdot 
      \left| \int_0^{t-1} \frac {\alpha m(\alpha, \lambda) (t-\tau)^{-1-\alpha}} {\exp (\lambda^{1/\alpha} t)} \, d \tau \right| \\
    & \le      
    \sup_{u \ge 0} | f(u) | \frac {\alpha m(\alpha, \lambda)} {| \exp (\lambda^{1/\alpha} t) |} \int_1^t \tau^{-1-\alpha} \, d \tau
     \le  \sup_{u \ge 0} | f(u) | \frac {m(\alpha, \lambda)} {| \exp (\lambda^{1/\alpha} t) |} 
  \end{align*}
  for $t > 1$; in particular we once again see that the upper bound converges to zero as $t \to \infty$, 
  and therefore \eqref{eq:proof-limitlemma} follows as desired.
\end{proof}

Using Lemma \ref{lemma3}, Lemma \ref{lemma4} and Lemma \ref{LimitLemma}, 
we obtain the asymptotic behavior of solutions to scalar linear fractional differential equations as follows.

\begin{lemma}\label{stableLemma}
  Let $\alpha \in (0,1]$, and let $f: [0,\infty) \rightarrow \C$ be a continuous function with the property $\lim_{t\to\infty} |f(t)| = 0$. 
  Consider the differential equation
  \begin{equation}\label{scalarEq}
    D_*^\alpha x(t) = \lambda x(t) + f(t), \qquad t > 0.
  \end{equation}
  The following statements hold:
  \begin{itemize}
  \item [(i)] If $|\arg{(\lambda)}| > \alpha \pi / 2$ then all solutions of \eqref{scalarEq} tend to zero as $t \to \infty$.
  \item[(ii)] If $|\arg{(\lambda)}| < \alpha \pi / 2$ then eq.\ \eqref{scalarEq} has a unique bounded solution. 
    Moreover, this solution tends to zero as $t \to \infty$.
  \end{itemize}
\end{lemma}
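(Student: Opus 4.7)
The plan is to exploit the scalar variation-of-constants formula for Caputo fractional equations (see \cite[Theorem~7.2 and Remark~7.1]{Diethelm2010}), which expresses every solution of \eqref{scalarEq} with initial value $c = x(0)$ as
\begin{equation*}
  x(t) = c\, E_\alpha(\lambda t^\alpha) + \int_0^t (t-\tau)^{\alpha-1} E_{\alpha,\alpha}(\lambda (t-\tau)^\alpha) f(\tau)\,d\tau.
\end{equation*}
Both assertions are then reduced to asymptotic estimates for this representation, built from Lemmas \ref{lemma3}, \ref{lemma4} and \ref{LimitLemma}.

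For part (i), when $\lambda \in \Lambda_\alpha^{\mathrm s}$, the homogeneous term $c\, E_\alpha(\lambda t^\alpha)$ tends to zero by the Mittag-Leffler asymptotic \eqref{eq:proof-3b} specialized to $\beta = 1$. For the convolution, I would use a standard $\varepsilon$-$T$ splitting: given $\varepsilon > 0$, pick $T$ so large that $|f(\tau)| \le \varepsilon$ for all $\tau \ge T$. The substitution $s = t-\tau$ combined with Lemma \ref{lemma4}(ii) then bounds $|\int_T^t \cdots d\tau|$ by $\varepsilon\, K(\alpha, \lambda)$, while $|\int_0^T \cdots d\tau|$ is controlled by $\sup_{\tau \ge 0}|f(\tau)| \cdot m(\alpha, \lambda) \cdot T\, (t-T)^{-1-\alpha}$ via Lemma \ref{lemma3}(ii), and this vanishes as $t \to \infty$ with $T$ fixed. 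Arbitrariness of $\varepsilon$ concludes~(i).

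For part (ii), with $\lambda \in \Lambda_\alpha^{\mathrm u}$, I would first establish uniqueness of a bounded solution. Dividing the variation-of-constants representation by $E_\alpha(\lambda t^\alpha)$ (nonzero for $t \ge 0$ by Wiman's theorem, as recorded in the proof of Lemma \ref{LimitLemma}) and applying Lemma \ref{LimitLemma} to the bounded continuous $f$ gives
\begin{equation*}
  \lim_{t \to \infty} \frac{x(t)}{E_\alpha(\lambda t^\alpha)} = c + \lambda^{1/\alpha - 1} \int_0^\infty \exp(-\lambda^{1/\alpha}\tau) f(\tau)\,d\tau.
\end{equation*}
Since $|E_\alpha(\lambda t^\alpha)| \to \infty$ by Lemma \ref{lemma3}(i), boundedness of $x$ forces this limit to vanish, singling out the unique admissible initial value
\begin{equation*}
  c = x^\ast := -\lambda^{1/\alpha - 1} \int_0^\infty \exp(-\lambda^{1/\alpha}\tau) f(\tau)\,d\tau.
\end{equation*}

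To verify that this candidate solution actually tends to zero, I would substitute $c = x^\ast$ and use $\int_0^\infty = \int_0^t + \int_t^\infty$ in the definition of $x^\ast$ to rewrite
\begin{align*}
  x(t) = {}& -\lambda^{1/\alpha - 1} E_\alpha(\lambda t^\alpha) \int_t^\infty \exp(-\lambda^{1/\alpha}\tau) f(\tau)\,d\tau \\
           & {}+ \int_0^t \bigl[(t-\tau)^{\alpha-1} E_{\alpha,\alpha}(\lambda(t-\tau)^\alpha) - \lambda^{1/\alpha-1} E_\alpha(\lambda t^\alpha) \exp(-\lambda^{1/\alpha}\tau)\bigr] f(\tau)\,d\tau.
\end{align*}
The first summand is bounded in modulus by $\sup_{\tau \ge t}|f(\tau)| \cdot K(\alpha, \lambda)$ via the first estimate in Lemma \ref{lemma4}(i), hence vanishes because $f(\tau) \to 0$. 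For the second summand, an $\varepsilon$-$T$ splitting combined with the second estimate in Lemma \ref{lemma4}(i) bounds the $[T, t]$ contribution by $\varepsilon\, K(\alpha, \lambda)$, while on $[0, T]$ the two asymptotic expansions in Lemma \ref{lemma3}(i) show that the bracketed kernel difference decays uniformly in $\tau$ as $t \to \infty$, of order $O((t-T)^{-1-\alpha}) + O(t^{-\alpha})$. I expect the main obstacle to lie precisely at this last step: neither $x^\ast E_\alpha(\lambda t^\alpha)$ nor the convolution integral decays on its own, and only the exact choice $c = x^\ast$ produces leading-order cancellation. The three auxiliary lemmas are designed to support this simultaneous control of the head, bulk, and overshoot regions, and the careful bookkeeping in the splitting is the delicate part of the argument.
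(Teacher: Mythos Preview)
Your proposal is correct and follows essentially the same approach as the paper: both start from the variation-of-constants formula \eqref{eq:var-of-const}, use an $\varepsilon$--$T$ splitting for the convolution, and in part~(ii) identify the unique admissible initial value via Lemma~\ref{LimitLemma} before rewriting $x(t)$ as a tail integral plus a kernel-difference integral (the paper's $H_1$, $H_2$, $H_3$). The only notable difference is that in part~(i) you handle the $[T,t]$ contribution in one stroke via Lemma~\ref{lemma4}(ii), whereas the paper further splits it into $[T,t-1]$ and $[t-1,t]$ and applies Lemma~\ref{lemma3}(ii) directly to each piece; your version is slightly more economical but relies on the same estimates.
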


\begin{proof}
  In either case, we start from the variation of constants formula \cite[Theorem~7.2 and Remark~7.1]{Diethelm2010}
  which tells us that the solution $\varphi(\cdot, x_0)$ of \eqref{scalarEq} that satisfies the condition $\varphi(0, x_0) = x_0$
  is given by
  \begin{equation}
    \label{eq:var-of-const}
    \varphi(t,x_0)
    = x_0 E_\alpha (\lambda t^\alpha) + \int_0^t (t-\tau)^{\alpha-1} E_{\alpha,\alpha}(\lambda(t-\tau)^\alpha) f(\tau) \, d \tau .
  \end{equation}

  In order to prove part (i), let $\varepsilon > 0$ be arbitrarily small. 
  We can find a constant $T > 0$ such that $|f(t)| < \varepsilon$ for all $t \ge T$. 
  For $t > T + 1$ and $x_0 \in \C$, we split up the integral on the right-hand side of eq.\ \eqref{eq:var-of-const} according to
  \begin{align*}
    \varphi(t,x_0)
    &  = x_0 E_\alpha (\lambda t^\alpha) + \int_0^T (t-\tau)^{\alpha-1} E_{\alpha,\alpha}(\lambda(t-\tau)^\alpha) f(\tau) \, d \tau
         + \int_T^{t-1} (t-\tau)^{\alpha-1} E_{\alpha,\alpha}(\lambda(t-\tau)^\alpha) f(\tau) \, d \tau \\
    & \phantom{= x_0 E_\alpha (\lambda t^\alpha)} {}  + \int_{t-1}^t (t-\tau)^{\alpha-1} E_{\alpha,\alpha}(\lambda(t-\tau)^\alpha) f(\tau)\, d \tau.
  \end{align*} 
  By virtue of Lemma \ref{lemma3}(ii), we have
  \begin{equation}\label{ScalarEq1+}
    \lim_{t\to \infty} x_0 E_\alpha(\lambda t^\alpha) = 0.
  \end{equation}
  On the other hand, by a simple computation, we obtain
  \begin{equation}\label{ScalarEq2+}
    \left| \int_T^{t-1} (t-\tau)^{\alpha - 1} E_{\alpha, \alpha} (\lambda (t-\tau)^\alpha) f(\tau) \, d \tau \right| 
     \le \varepsilon \int_{1}^{t-T} | \tau^{\alpha - 1} E_{\alpha,\alpha} (\lambda \tau^\alpha) | \, d \tau 
     \le \frac{\varepsilon\, m(\alpha,\lambda)}{\alpha}
  \end{equation}
  due to Lemma~\ref{lemma3}(ii)
  and 
  \begin{equation}\label{ScalarEq3+}
    \left| \int_{t-1}^t (t-\tau)^{\alpha-1} E_{\alpha, \alpha} (\lambda (t-\tau)^\alpha) f(\tau) \, d \tau \right| 
    \le \varepsilon \int_0^1 | \tau^{\alpha - 1} E_{\alpha, \alpha} (\lambda \tau^\alpha) | \, d \tau 
    \le \varepsilon  E_{\alpha, \alpha+1}(| \lambda |)
  \end{equation}
  (see \cite[eq.\ (1.99)]{Podlubny1999}).
  Furthermore, 
  \begin{equation}\label{ScalarEq4+}
    \left| \int_0^T (t-\tau)^{\alpha-1} E_{\alpha,\alpha}(\lambda (t-\tau)^\alpha) f(\tau) d \tau \right| 
    \le \sup_{t \geq 0} |f(t)| \int_{t-T}^t | \tau^{\alpha-1} E_{\alpha, \alpha} (\lambda \tau^\alpha) | \, d \tau
    \le \frac{m(\alpha,\lambda) \sup_{t \geq 0} |f(t)|}{\alpha (t-T)^\alpha}
  \end{equation}
  due to Lemma~\ref{lemma3}(ii).
  Since $\epsilon$ is arbitrarily small, from eqs.\ \eqref{ScalarEq1+}, \eqref{ScalarEq2+}, \eqref{ScalarEq3+} and \eqref{ScalarEq4+}, we get
  \[
  \lim_{t\to \infty} |\varphi(t,x_0)| = 0,
  \]
  and the proof of part (i) is complete.
  
  For the proof of (ii), we note that Lemma \ref{LimitLemma} admits us to precisely describe the asympotic behavior of the 
  integral on the right-hand side of eq.\ \eqref{eq:var-of-const}, namely
  \begin{equation*}
    \int_0^t (t-\tau)^{\alpha-1} E_{\alpha,\alpha}(\lambda(t-\tau)^\alpha) f(\tau) \, d \tau
    = E_\alpha (\lambda t^\alpha) \lambda^{1/\alpha - 1} \int_0^\infty \exp( - \lambda^{1/\alpha} \tau) f(\tau) \, d \tau \cdot (1 + o(1)).
  \end{equation*}
  Thus, by \eqref{eq:var-of-const}, any solution to the differential equation behaves as
  \begin{equation}
    \label{eq:ScalarEq5+}
    \varphi(t, x_0)
    = E_\alpha (\lambda t^\alpha) \left[ x_0 + \lambda^{1/\alpha - 1} \int_0^\infty \exp( - \lambda^{1/\alpha} \tau) f(\tau) \, d \tau \cdot (1 + o(1)) \right]
  \end{equation}
  for $t \to \infty$. Since $| \arg \lambda | < \alpha \pi /2$, we know that $E_\alpha (\lambda t^\alpha)$ is unbounded as $t \to \infty$. 
  Thus, a necessary condition for the entire expression on the right-hand side of \eqref{eq:ScalarEq5+} to be bounded is that the term in brackets converges to zero
  as $t \to \infty$. Clearly, this is the case if and only if 
  \begin{equation*}
    x_0 =  \overline{x}_0 \coloneqq - \lambda^{1/\alpha - 1} \int_0^\infty \exp( - \lambda^{1/\alpha} \tau) f(\tau) \, d \tau.
  \end{equation*}
  Thus, the differential equation \eqref{scalarEq} has at most one bounded solution, and it remains to prove that this solution has the property 
  $\varphi(t, \overline{x}_0) \to 0$ as $t \to \infty$ (which, in particular, implies that the solution is bounded and hence that a bounded solution exists).

  To this end, let $\varepsilon > 0$ be an arbitrary positive real number. Then there exists a positive constant $T > 0$ such that 
  \begin{equation}\label{DecayCond}
    |f(t)| \le \varepsilon \qquad \mbox{ for all } t \geq T.
  \end{equation}
  For any $t \geq T+ 1$, we put
  \begin{align*}
    H_1(t) & = - E_\alpha(\lambda t^\alpha) \lambda^{1/\alpha-1} \int_t^\infty \exp{(-\lambda^{1/\alpha} \tau)} f(\tau) \, d \tau , \\
    H_2(t) & = \int_0^T \left[ (t-\tau)^{\alpha-1} E_{\alpha, \alpha} (\lambda(t-\tau)^\alpha) 
                                - \lambda^{1/\alpha-1} \exp{(-\lambda^{1/\alpha}\tau)} E_\alpha(\lambda t^\alpha) \right] f(\tau) \, d \tau, \\
    H_3(t) & = \int_T^t \left[ (t-\tau)^{\alpha-1} E_{\alpha, \alpha} (\lambda(t-\tau)^\alpha) 
                                - \lambda^{1/\alpha-1} \exp{(-\lambda^{1/\alpha} \tau)} E_\alpha(\lambda t^\alpha)\right] f(\tau) \, d \tau.
  \end{align*}
  It is then clear from eq.\ \eqref{eq:var-of-const} and the definition of $\overline{x}_0$ that
  \begin{equation*}
    \varphi(t, \overline{x}_0) = H_1(t) + H_2(t) + H_3(t).
  \end{equation*}
  By virtue of \eqref{DecayCond} and the first statement of Lemma \ref{lemma4}(i), we have
  \begin{equation}\label{ScalarEq1-}
    |H_1(t)| \le  \varepsilon K(\alpha,\lambda).
  \end{equation}
  Using both statements of Lemma~\ref{lemma3}(i), we obtain, since $t - T \ge 1$,
  \begin{align}
    \nonumber
    |H_2(t)|
     \le &
       \sup_{t \geq 0} |f(t)| 
       \int_0^T \left[ 
         | \lambda |^{1/\alpha-1} \left| \frac 1 \alpha \exp(\lambda^{1/\alpha} (t - \tau)) - \exp(-\lambda^{1/\alpha} \tau) E_\alpha (\lambda t^\alpha) \right|
           + \frac {m(\alpha, \lambda)} {(t-\tau)^{1+\alpha}} \right]
          \, d \tau \\
     \nonumber
     \le &  
       \sup_{t \geq 0} |f(t)|  \left[ | \lambda |^{1/\alpha-1}
       \int_0^T | \exp(-\lambda^{1/\alpha} \tau)| \cdot \left|\frac 1 \alpha \exp(\lambda^{1/\alpha}t) - E_\alpha (\lambda t^\alpha) \right| \, d \tau 
           \right. \\
     \nonumber
       &  \phantom{ \sup_{t \geq 0} |f(t)| [ } \left . {} 
           + m(\alpha, \lambda) \int_0^T  \frac {d \tau} {(t-\tau)^{1+\alpha}}
       \right] \\
     \le & 
       m(\alpha, \lambda)  \sup_{t \geq 0} |f(t)|  
       \left[ | \lambda |^{1/\alpha-1} t^{-\alpha}
          \int_0^T | \exp(-\lambda^{1/\alpha} \tau)| \, d \tau
         + \frac {(t-T)^{-\alpha} - t^{-\alpha}} \alpha
       \right]   .
       \label{ScalarEq2-}
  \end{align}
  Since $\lambda \in \Lambda_\alpha^{\mathrm u}$, we conclude once again that
  \begin{equation*}
    \int_0^T | \exp(-\lambda^{1/\alpha} \tau)| \, d \tau
    =     \int_0^T \exp(-\Re \lambda^{1/\alpha} \tau) \, d \tau
    =     \frac 1 {\Re \lambda^{1/\alpha}} \left[ 1 - \exp ( -\Re \lambda^{1/\alpha} T ) \right] 
    \le  \frac 1 {\Re \lambda^{1/\alpha}}
  \end{equation*}
  and thus we see from eq.\ \eqref{ScalarEq2-} that 
  \begin{equation}
    \label{ScalarEq2a-}
    H_2(t) \to 0 \qquad \mbox{ as } t \to \infty.
  \end{equation}
  Furthermore, by \eqref{DecayCond} and the second statement of Lemma \ref{lemma4}(i), we have
  \begin{equation}\label{ScalarEq3-}
    |H_3(t)| \le \varepsilon K(\alpha,\lambda).
  \end{equation}
  From \eqref{ScalarEq1-}, \eqref{ScalarEq2a-}, \eqref{ScalarEq3-} and the
  fact that $\varepsilon > 0$ can be made arbitrarily small, we conclude
  \begin{equation*}
    \lim_{t\to \infty} \varphi (t, \overline{x}_0) = 0.
  \end{equation*}
  The proof is complete.
\end{proof}

\section*{Acknowledgement}
The work of H.T. Tuan is supported by the Vietnam National Foundation
for Science and Technology Development (NAFOSTED).

\end{document}